

\documentclass[a4paper,11pt,reqno]{amsart}

\usepackage[defaultlines=2,all]{nowidow}

\allowdisplaybreaks

\usepackage{ mathtools, amssymb, mathdots, bbm, mleftright, faktor, stmaryrd, bm }

\usepackage{ letltxmacro }    
\usepackage{ xparse }

\usepackage{ tikz, tikz-cd }
\usetikzlibrary{shapes.misc, calc, patterns}

\usepackage[shortlabels]{ enumitem }

\usepackage{ standalone, graphicx }
\usepackage[section]{ placeins } 
\usepackage[margin=0.25cm]{ caption }
\usepackage{ marginnote }

\usepackage{ amsthm, thmtools, hyperref, url }
\usepackage[nameinlink]{ cleveref }

\numberwithin{equation}{section}
\newtheorem{theorem}{Theorem}[section]
\newtheorem*{theorem*}{Theorem}
\newtheorem{corollary}[theorem]{Corollary}

\newtheorem{proposition}[theorem]{Proposition}
\newtheorem{conjecture}[theorem]{Conjecture}

\newtheorem{lemma}[theorem]{Lemma}

\theoremstyle{definition}
\newtheorem{definition}[theorem]{Definition}

\newtheorem{example}[theorem]{Example}

\newcounter{subequation}

\newcounter{thmlistcnt}

	{\setcounter{thmlistcnt}{0}%
	\begin{list}{\emph{(\roman{thmlistcnt})}}{%
		\usecounter{thmlistcnt}%
		\setlength{\topsep}{0pt}%
		\setlength{\leftmargin}{0pt}%
		\setlength{\itemsep}{0pt}%
		\setlength{\labelwidth}{17pt}
		\setlength{\itemindent}{30pt}}%
	}%
	{\end{list}}%
	
\newcounter{exlistcnt}
\newenvironment{exlist}%
	{\setcounter{exlistcnt}{0}%
	\begin{list}{(\alph{exlistcnt})}{%
		\usecounter{exlistcnt}%
		\setlength{\topsep}{0pt}%
		\setlength{\leftmargin}{0pt}%
		\setlength{\itemsep}{0pt}%
		\setlength{\labelwidth}{0pt}
		\setlength{\itemindent}{15pt}}%
	}%
	{\end{list}}%
  



\newcommand{\Z}{\mathbb{Z}}

\newcommand{\C}{\mathbb{C}}
\newcommand{\F}{\mathbb{F}}
\newcommand{\N}{\mathbb{N}}




\usepackage[vcentermath,enableskew]{youngtab} 

%

\DeclareMathOperator{\Sym}{Sym}



\renewcommand*{\det}{\qopname\relax o{det}}

\DeclareMathOperator{\GL}{GL}
\DeclareMathOperator{\SL}{SL}
\renewcommand{\sl}{\mathsf{sl}}


\renewcommand{\epsilon}{\varepsilon}
\renewcommand{\phi}{\varphi}

\newcommand{\mbf}[1]{\mathbf{#1}}

\linespread{1.15}


\newcommand{\qbinom}[2]{\genfrac{[}{]}{0pt}{}{#1}{#2}}


\newcommand{\E}{\!E}

\newcommand{\B}{\mathcal{B}}
\newcommand{\CC}[2]{\mathcal{C}_{#1}^{(#2)}}
\newcommand{\CCs}[2]{\mathcal{C}_{#1}^{\scalebox{0.75}{$\scriptscriptstyle (#2)$}}}

\DeclareMathOperator{\End}{End}
\newcommand{\rev}{\mathrm{rev}}

\definecolor{lightblue}{rgb}{0.75,0.75,1}
\definecolor{lightred}{rgb}{1,0.75,0.75}
\definecolor{lightgreen}{rgb}{0.5,1,0.5}
\definecolor{verylightgray}{rgb}{0.9,0.9,0.9}

\definecolor{orange}{rgb}{1,0.5,0.25}

\newcommand{\NB}{\mathcal{P}}
\newcommand{\CN}{\textup{c}}
\newcommand{\cp}[2]{\bigl( #1, #2 \bigr)}
\newcommand{\ddp}[2]{\rotatebox{90}{\scalebox{0.8}{$\bigl(#1, #2 \bigr)$}}}
\newcommand{\bigquad}{\hspace*{13pt}}

\newcommand{\I}[2]{I^{(#1)}(#2)}
\renewcommand{\P}[3]{F(#2,#3)}   
\newcommand{\PA}[2]{F\bigl(#2,#1-|#2|\bigr)} 

\newcommand{\vt}{t} 

\begin{document}

\title[A modular plethystic isomorphism]{A new modular plethystic $\SL_2(\F)$-isomorphism 
$\Sym^{N-1}\E \otimes \bigwedge^{N+1} \Sym^{d+1}\E \cong \Delta^{(2,1^{N-1})} \Sym^d\E$}


\author{Alvaro L. Martinez and Mark Wildon}
\email{martinez@math.columbia.edu}
\email{mark.wildon@bristol.ac.uk}


\subjclass[2020]{Primary: 20C20, Secondary: 05E05, 05E10, 17B10, 20G05}

\newcommand{\dV}{d} 
\maketitle
\thispagestyle{empty}

\vspace*{-5pt} 
\begin{abstract}
Let $\F$ be a field and let $E$ be the natural representation of $\SL_2(\F)$. Given a vector space $V$, let $\Delta^{(2,1^{N-1})}V$ be the kernel of the multiplication map $\bigwedge^N \!V \otimes V \rightarrow \bigwedge^{N+1}\!V$. We construct an explicit $\SL_2(\F)$-isomorphism $\Sym^{N-1}\E \otimes \bigwedge^{N+1} \Sym^{d+1}\E \cong \Delta^{(2,1^{N-1})} \Sym^d\E$. This $\SL_2(\F)$-isomorphism is a modular lift of the $q$-binomial identity $q^{\frac{N(N-1)}{2}}[N]_q \qbinom{d+2}{N+1}_q = s_{(2,1^{N-1})}(1,q,\ldots, q^d)$, where $s_{(2,1^{N-1})}$ is the Schur function for the partition $(2,1^{N-1})$. This identity, which follows from our main theorem, implies the existence of an isomorphism when $\F$ is the field of complex numbers but it is notable, and not typical of the general case, that there is an explicit isomorphism defined in a uniform way for any field.
\end{abstract}


\section{Introduction}

Let $\F$ be an arbitrary field and 
let $E$ be the natural $2$-dimensional representation of the special linear group $\SL_2(\F)$.
Let $\Delta^\lambda$ denote the Schur functor canonically labelled by the partition $\lambda$.
Working over the field of complex numbers there is a rich theory of plethystic isomorphisms between
the representations $\Delta^\lambda \Sym^d \E$. These include Hermite reciprocity and the Wronskian isomorphism;
we refer the reader to \cite{PagetWildonSL2}
for a comprehensive account and references
to earlier results.
In \cite{McDowellWildon} it was shown
that both these classical
isomorphisms hold over an arbitrary field, \emph{provided that suitable dualities are introduced}.
The modular version of Hermite reciprocity is $\Sym_M \Sym^d \E \cong \Sym^d \Sym_M \E$,
where, given a $\SL_2(\F)$-representation $V$,
$\Sym^r \hskip-0.5pt V$ is the symmetric power defined as a quotient of $V^{\otimes r}$
and $\Sym_r \!V$ is its dual defined as the subspace of invariant tensors in $V^{\otimes r}$.
The modular Wronskian isomorphism is $\Sym_M \Sym^d \E \cong \bigwedge^M \Sym^{d+M-1} \E$. 
The purpose of this article is to add to the  collection of such modular plethystic isomorphisms
by proving the following theorem.
The version of the
Schur functor $\Delta^{(2,1^{N-1})}$ we require is defined in \S\ref{subsec:preliminaries} immediately below.

\begin{theorem}\label{thm:main}
Let $N \in \N$ and let $d \in \N_0$.
The map $\phi$ defined in Definition~\ref{defn:phi} is an
isomorphism of $\SL_2(\F)$-representations
\[ \Sym^{N-1}\E \otimes \bigwedge^{N+1} \Sym^{d+1}\E \cong \Delta^{(2,1^{N-1})} \Sym^d\E.\]
\end{theorem}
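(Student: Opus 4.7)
My plan is to prove Theorem~\ref{thm:main} by reducing bijectivity of $\phi$ to a computation on $T$-weight spaces, where $T \subset \SL_2(\F)$ denotes the diagonal torus. First, $\SL_2(\F)$-equivariance of $\phi$ should follow directly from its construction in Definition~\ref{defn:phi}, since the construction uses only natural operations (exterior powers, tensor products, and multiplication). The dimensions of both sides agree---each equals $N\binom{d+2}{N+1}$, the $q=1$ specialisation of the $q$-binomial identity recalled in the abstract---so it suffices to prove (a) that $\phi$ lands in the kernel $\Delta^{(2,1^{N-1})}\Sym^d\E$, and (b) that $\phi$ is injective.

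For (a), I would verify $\mu \circ \phi = 0$, where $\mu \colon \bigwedge^N \Sym^d\E \otimes \Sym^d\E \to \bigwedge^{N+1}\Sym^d\E$ is the multiplication map that defines $\Delta^{(2,1^{N-1})}\Sym^d\E$. This should reduce to a formal antisymmetrisation identity, exploiting the built-in antisymmetry of $\bigwedge^{N+1}\Sym^{d+1}\E$ in the domain: after wedging the image of $\phi$ with the ``extra'' $\Sym^d\E$ factor, the resulting signed sum should collapse term by term against the antisymmetry on the domain side.

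For (b), I would decompose each side into $T$-weight spaces. The weight-space dimensions agree on the two sides (this is equivalent to the $q$-binomial identity), so it is enough to prove injectivity one weight at a time. On each weight space I would choose natural monomial bases: for the domain, tensor products of $x^iy^{N-1-i}$ with ordered wedges of monomials in $\Sym^{d+1}\E$; for the codomain, elements of the kernel of $\mu$ built from ordered wedges of monomials in $\Sym^d\E$ tensored with a monomial in $\Sym^d\E$. Then I would compute the matrix of $\phi$ in these bases and identify a lexicographic order with respect to which it is unitriangular. Finding the correct term order and verifying the triangularity claim---that is, checking that distinct basis elements produce distinct leading monomials---is the main obstacle, as usual for modular plethystic isomorphisms. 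The payoff is that such triangularity in a monomial basis upgrades equivariance and kernel membership to an isomorphism over an arbitrary field, bypassing the character-theoretic tools that are unavailable in positive characteristic.
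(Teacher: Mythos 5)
Your outline for parts (a) and (b) is broadly in the spirit of the paper's actual argument: the paper proves kernel membership by an explicit sign-cancelling involution on the index set (Lemma~\ref{lemma:imageInDelta}), establishes the dimension count $N\binom{d+2}{N+1}$ for $\Delta^{(2,1^{N-1})}\Sym^d\E$ via rank--nullity and a semistandard basis (Proposition~\ref{prop:dims}), and then proves bijectivity weight-by-weight through a unitriangularity argument with respect to a total order on semistandard pairs (lexicographic on content, then by the second entry; Proposition~\ref{prop:triangular}). So the combinatorial core of your plan is sound, although you should be aware that identifying the correct order and proving triangularity is a genuinely delicate case analysis, not a routine check of leading terms.

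The serious gap is your first step: the claim that ``$\SL_2(\F)$-equivariance of $\phi$ should follow directly from its construction, since the construction uses only natural operations.'' This is false. The map $\phi$ of Definition~\ref{defn:phi} is \emph{not} built from natural operations on functors; it is defined by an explicit coordinate formula on a chosen monomial basis, sending $X^{N-1-s}Y^s \otimes F^{(d+1)}_\wedge(\mbf{k})$ to a sum over the box $\B(\mbf{k})$. There is no a priori reason such a formula intertwines the group action --- indeed, the existence of \emph{any} equivariant map between these two plethysms is the surprising content of the theorem. Verifying equivariance is one of the two main technical burdens of the proof: one must first reduce from $\SL_2(\F)$ to $\sl_2(\C)$ (via the observation that intertwining with the unipotent generators $U_\gamma$ is a polynomial identity over $\Z[\gamma]$, so may be checked over $\C$), and then carry out a nontrivial computation showing $\phi$ commutes with the Lie algebra generator $e$ (this rests on a combinatorial identity comparing sums over the shifted boxes $\B(\mbf{k}-\mbf{u}^{(\alpha)})$ and $\B(\mbf{k})-\mbf{u}^{(\beta)}$), together with a duality argument to deduce commutation with $f$. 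Without some version of this work your proof does not get off the ground, since every subsequent step presupposes that $\phi$ is a homomorphism of representations.
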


This theorem is notable as the first explicit example of a modular plethystic isomorphism
involving a Schur functor for a partition that is not one-row or one-column, and also for the
unexpected tensor factorisation it exhibits.

This isomorphism is a modular lift of the $q$-binomial identity
\begin{equation}
\label{eq:qCharactersSame}
q^{\frac{N(N-1)}{2}} [N]_q \qbinom{d+2}{N+1}_q = s_{(2,1^{N-1})}(1,q,\ldots, q^d)
\end{equation}
where $s_{(2,1^{N-1})}$ denotes the Schur function labelled by the partition $(2,1^{N-1})$.
We structure our proof so that we can obtain~\eqref{eq:qCharactersSame} as a fairly
routine corollary of Theorem~\ref{thm:main}: see Corollary~\ref{cor:qChar}, where we also give combinatorial
interpretations of each side.
As shown in \cite[Theorem 1.6]{McDowellWildon}
there exist representations of the form $\Delta^\lambda \Sym^d E$ 
that have equal $q$-characters in the sense of~\eqref{eq:qCharactersSame}, and so are isomorphic over~$\C$, 
but fail to be isomorphic over arbitrary fields $\F$, even after
considering all possible dualities. Indeed, the authors believe this is the generic case.
This adds to be interest and importance of Theorem~\ref{thm:main}.
We finish with Corollary~\ref{cor:phiGL}, which lifts the isomorphism in Theorem~\ref{thm:main}
to an isomorphism of representations of $\GL_2(\F)$, and Conjecture~\ref{conj:phiGeneral} on a
conjectured more general isomorphism.

\subsection{Preliminaries}\label{subsec:preliminaries}
Fix a basis $X$, $Y$ of the $\F$-vector space $E$. For each $r \in \N_0$,
the symmetric power $\Sym^c \!E$ has as a basis the monomials $X^{c-i}Y^i$ for
$0\le i \le c$. 

\subsubsection*{Schur functor}
It will be convenient to define the Schur functor $\Delta^{(2,1^{N-1})}$ 
on a vector space $V$ by
\begin{equation}\label{eq:muN} 
\Delta^{(2,1^{N-1})} \,V = \ker \mu_N : \bigwedge^N V \otimes V \rightarrow \bigwedge^{N+1}V \end{equation}
where $\mu_N$ is the multiplication map  $v_1 \wedge \cdots \wedge v_N \otimes w \mapsto
v_1 \wedge \cdots \wedge v_N \wedge w$.
Thus $\Delta^{(2,1^{N-1})}\, V$ is a subspace of $\bigwedge^N \hskip-0.5pt V \otimes V$.
Since $\mu_N$ is a homomorphism of representations of $\GL(V)$, for any fixed group $G$,
$\Delta^{(2,1^{N-1})}$ is a functor on the category of $\F$-representations of $G$.

\subsubsection*{Multi-indices}
For $c$, $r \in \N_0$, let $\I{c}{r}$ denote the set $\{0,1,\ldots, c\}^r$. We say
that the elements of $\I{c}{r}$ are \emph{multi-indices}.
We define the \emph{sum} of a multi-index $\mbf{i}$ by $|\mbf{i}| = \sum_{\alpha=1}^r i_\alpha$.
Given $\mbf{i} \in \I{c}{r}$ we define
\[ F^{(c)}_\wedge(\mbf{i}) = X^{c-i_1}Y^{i_1} \wedge \cdots \wedge X^{c-i_r}Y^{i_r}. \]
Thus $\bigwedge^r \Sym^c \E$ has as a basis all $F^{(c)}_\wedge(\mbf{i})$ for 
strictly increasing $\mbf{i} \in \I{c}{r}$.
We say that a pair $(\mbf{i}, j) \in \I{c}{r} \times \{0,1,\ldots, c\}$
is \emph{semistandard} if $\mbf{i}$ is strictly increasing and $i_1 \le j$.
Observe that $(\mbf{i}, j)$ is semistandard if and only if the $(2,1^{N-1})$-tableau $t_{(\mbf{i},j)}$ shown in the margin
having
entry $i_\alpha$ in box $(\alpha,1)$ and entry $j$ in box $(1,2)$ is semistandard in the usual sense.
\marginpar{\raisebox{-12pt}{$\ \ \raisebox{48pt}{$t_{(\mbf{i},j)} =$} \, \, \begin{tikzpicture}[x=0.65cm, y=-0.65cm]\draw(0,0)--(2,0)--(2,1)--(0,1)--(0,0); 
\draw(0,0)--(0,5)--(1,5)--(1,0); \draw (0,2)--(1,2); \draw (0,4)--(1,4);
\node at (0.5,0.5) {$i_1$}; 
\node at (1.5,0.5) {$j$};
\node at (0.5,1.5) {$i_2$};
\node at (0.5,2.75) {$\vdots$};
\node at (0.5,4.5) {$i_N$}; \end{tikzpicture}$}}

\begin{definition}[Content and Neighbour]\label{defn:neighbour}
Let $d \in \N_0$.
Let $\mbf{i} \in \I{d}{N}$ and $j \in \{0,1,\ldots, d\}$ and let $(\mbf{i},j)$
be semistandard. We define the \emph{content}
of $(\mbf{i}, j)$ to be the multiset $\{i_1,\ldots, i_N \} \cup \{ j \}$.
We define the \emph{neighbour} of $(\mbf{i}, j)$ by
\begin{equation}
\label{eq:neighbour}
\NB(\mbf{i}, j) = \bigl( (i_1, \ldots, i_{\alpha-1}, j, i_{\alpha+1}, \ldots, i_N), i_\alpha \bigr)
\end{equation}
where $\alpha \in \{1,\ldots, N\}$ is maximal such that $i_\alpha \le j$.
\end{definition}

Observe that the neighbour map is well-defined because $i_1 \le j$ and that it
preserves content. Moreover,
$\NB(\mbf{i}, j) = (\mbf{i}, j)$ if and only if $j$ is in the set $\{i_1,\ldots, i_N\}$;
in this case $j$ is the unique repeated element of the multiset.
For example, repeated applications of the neighbour map give
\begin{equation}\label{eq:chain} 
\bigl( (0,2,3), 5 \bigr) \stackrel{\NB}{\longmapsto} \bigl((0,2,5), 3\bigr)
\stackrel{\NB}{\longmapsto}  \bigl((0,3,5), 2\bigr)\stackrel{\NB}{\longmapsto}  \bigl((2,3,5), 0\bigr) 
\end{equation}
in which all pairs have content $\{0,2,3,5\}$
and the final pair is the only one that is not semistandard
and so does not have a defined neighbour.

\begin{definition}\label{defn:F}
Let $d \in \N_0$.
Let $\mbf{i} \in \I{d}{N}$ and $j \in \{0,1,\ldots, d\}$. We define
$F(\mbf{i},j) \in \bigwedge^N \hskip-0.5pt\Sym^d\E \otimes \Sym^d \E$ by
\begin{equation}\label{eq:F} 
F(\mbf{i},j) = F^{(d)}_\wedge (\mbf{i}) \otimes  X^{d-j}Y^{j}. \end{equation}
If $(\mbf{i}, j)$ is semistandard we define
\begin{equation}\label{eq:FDelta} F_\Delta(\mbf{i},j) =
\begin{cases} F(\mbf{i},j) & \text{if $j \in \{i_1,\ldots, i_N\}$} \\
	F(\mbf{i},j) + F\bigl( \NB(\mbf{i}, j) \bigr) 
& \text{otherwise.}\end{cases} 
\end{equation}
\end{definition}

Whenever we use the notation $F(\mbf{i},j)$ in~\eqref{eq:F}, the value of $d$ will be clear from context.
To give an example  we take $N=3$ and $d=5$.
Then, omitting some parentheses for readability, we have
$F\bigl((0,2,5),3\bigr) = 
 X^5 \wedge X^3Y^2 \wedge Y^5 \otimes X^2Y^3$ and
\begin{align*} 
F_\Delta\bigl((0,2,5),3\bigr) &= 
F\bigl((0,2,5), 3 \bigr) + F\bigl((0,3,5),2\bigr) \\
&= X^5 \wedge X^3Y^2 \wedge Y^5 \otimes X^2Y^3
+ X^5 \wedge X^2Y^3 \wedge Y^5 \otimes X^3Y^2. \end{align*}

\subsubsection*{Semistandard basis}
It is clear that $\bigwedge^N \Sym^d \E \otimes \Sym^d \E$ has as a canonical basis all $F(\mbf{i},j)$ for
strictly increasing $\mbf{i} \in \I{d}{N}$
and $j \in \{0,1,\ldots, d\}$. 
In this subsection we show
that the $F_\Delta(\mbf{i}, j)$ for semistandard $(\mbf{i}, j)$ form a basis for its submodule $\Delta^{(2,1^{N-1})} \Sym^d E$,
which we defined to be the kernel of $\mu_N :  \bigwedge^N V \otimes V \rightarrow \bigwedge^{N+1}V$
in~\eqref{eq:muN}.

We first show that these elements are in the kernel.
Let $\mbf{i} \in \I{d}{N}$ be strictly increasing
and let $j \in \{0,1,\ldots, d\}$. Let $\alpha$ be maximal
such that $i_\alpha \le j$. Then
\begin{align*}
 \mu_N  F \bigl( \NB(\mbf{i}, j) \bigr) &= 
\mu_N F\bigl( (i_1, \ldots, i_{\alpha-1}, j, i_{\alpha+1}, \ldots, i_N), i_\alpha \bigr) \\
&= X^{d-i_1}Y^{i_1} \wedge \cdots \wedge X^{d-j}Y^j \wedge \cdots
\wedge X^{d-i_N}Y^{i_N} \wedge X^{d-i_\alpha}Y^{i_\alpha}. 
\end{align*}
Up to  a swap of the factor  $X^{d-j}Y^j$ in position $\alpha$
and the final factor $X^{d-i_\alpha}Y^{i_\alpha}$, the right-hand side agrees with $\mu_N \bigl( F(\mbf{i}, j) \bigr)$.
Hence, by the definition of $F_\Delta$ in~\eqref{eq:FDelta}, we have
$\mu_N F_\Delta(\mbf{i},j) = 0$.
In order to prove the basis theorem, we shall use the following lemmas.

\begin{lemma}\label{lemma:chain}
Let $\{a_1,\ldots, a_N, b \}$ be a subset of $\{0,1,\ldots, d\}$
with $a_1 < \ldots < a_N < b$.
The $N$ distinct semistandard
pairs with content $\{a_1,\ldots, a_N,b\}$ are
$\mathcal{P}^\alpha\bigl( (a_1,\ldots,a_N), b \bigr)$ for $\alpha\in\{0,1,\ldots,N-1\}$; the second elements of these pairs form
the decreasing chain $b > a_N > \ldots > a_2$.
\end{lemma}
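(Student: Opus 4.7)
The plan is to count the semistandard pairs with the given content directly, and then verify by induction on $k$ that the $N$ iterates $\NB^k\bigl((a_1,\ldots,a_N),b\bigr)$ for $0 \le k \le N-1$ are $N$ distinct such pairs, so that by the count they must form the complete list.

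Since the content $\{a_1,\ldots,a_N,b\}$ has $N+1$ distinct elements, any pair $(\mbf{i},j)$ with this content is determined by its second coordinate $j$, as $\mbf{i}$ is then forced to be the strictly increasing arrangement of the remaining $N$ elements. The semistandard condition $i_1 \le j$ holds unless $j = a_1$ (in which case $i_1 = a_2 > a_1$), so the semistandard pairs are in bijection with the $N$ choices $j \in \{b, a_N, \ldots, a_2\}$.

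For the inductive argument I would write $a_{N+1} := b$ for notational convenience and show that for $0 \le k \le N-1$,
\[ \NB^k\bigl((a_1,\ldots,a_N),a_{N+1}\bigr) = \bigl((a_1,\ldots,a_{N-k},a_{N-k+2},\ldots,a_{N+1}),\, a_{N-k+1}\bigr), \]
reading empty ranges as empty. The base case $k=0$ is immediate. For the inductive step, in the listed $\mbf{i}$ the entries after position $N-k$ are $a_{N-k+2},\ldots,a_{N+1}$, all strictly larger than $j = a_{N-k+1}$; therefore the maximal index in~\eqref{eq:neighbour} is exactly $N-k$, and applying the swap specified there exchanges $a_{N-k}$ with $a_{N-k+1}$ between the two coordinates, yielding the formula with $k$ replaced by $k+1$. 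Each of the $N$ pairs so produced is semistandard, since the first coordinate of $\mbf{i}$ is always $a_1$ and $a_1 \le a_{N-k+1}$ for $k \le N-1$, and their second components $b > a_N > \cdots > a_2$ form a strictly decreasing sequence, hence are distinct. Combined with the previous paragraph, this forces these $N$ pairs to be the complete list of semistandard pairs with the given content.

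The only obstacle is bookkeeping the positions of the $a_i$ inside $\mbf{i}$ as the swaps occur; conceptually, the chain has to terminate after exactly $N$ steps because the content contains no repeats, which guarantees that the second coordinate of $\NB(\mbf{i},j)$ is always strictly smaller than $j$, so the iterates cannot revisit any earlier pair.
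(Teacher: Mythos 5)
Your proof is correct and is exactly the argument the paper has in mind: the paper's ``proof'' simply leaves the lemma to the reader as a routine generalisation of the example chain in~\eqref{eq:chain}, and your counting argument (the $N$ semistandard pairs are indexed by the choice of second component $j \in \{a_2,\ldots,a_N,b\}$) together with the induction identifying $\NB^k$ supplies the details faithfully.
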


\begin{proof}
We leave this to the reader as a routine generalisation of~\eqref{eq:chain}.
\end{proof}

\begin{lemma}\label{lemma:DeltaLinearlyIndependent} 
The $F_\Delta(\mbf{i}, j)$ for semistandard $(\mbf{i}, j) \in \I{d}{N} \times
\{0,1,\ldots, d\}$ are linearly independent.
\end{lemma}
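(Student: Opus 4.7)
The plan is to separate the canonical basis of $\bigwedge^N \Sym^d \E \otimes \Sym^d \E$ by content and reduce linear independence to a block-by-block check. Since $\NB$ preserves content (as noted after Definition~\ref{defn:neighbour}), each $F_\Delta(\mbf{i}, j)$ given by~\eqref{eq:FDelta} is supported on basis vectors $F(\mbf{i}', j')$ sharing the content of $(\mbf{i}, j)$. As the content-blocks of the canonical basis are disjoint, it suffices to establish linear independence within each such block separately.

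First I would dispose of contents with a repeated value: the strict increase of $\mbf{i}$ forces $j$ to equal the repeated entry, leaving a unique semistandard pair with that content; by the first clause of~\eqref{eq:FDelta} its $F_\Delta$ is just the basis vector $F(\mbf{i}, j)$ and so is trivially independent. The substantive case is a content $C = \{a_1, \ldots, a_N, b\}$ with $a_1 < \ldots < a_N < b$ all distinct. Lemma~\ref{lemma:chain} produces exactly $N$ semistandard pairs $p_\alpha := \NB^\alpha(p_0)$ for $\alpha = 0, \ldots, N-1$, where $p_0 = ((a_1,\ldots,a_N), b)$; a short enumeration confirms that the basis pairs with content $C$ are $p_0, p_1, \ldots, p_{N-1}$ together with $p_N := \NB(p_{N-1}) = ((a_2, \ldots, a_N, b), a_1)$, which is strictly increasing in its first component (hence a valid basis element) but fails to be semistandard.

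Since no value in $C$ is repeated, the second clause of~\eqref{eq:FDelta} applies at every $p_\alpha$ with $\alpha < N$, yielding $F_\Delta(p_\alpha) = F(p_\alpha) + F(p_{\alpha+1})$. The matrix expressing $(F_\Delta(p_0), \ldots, F_\Delta(p_{N-1}))$ in the basis $(F(p_0), \ldots, F(p_N))$ is then the $N \times (N+1)$ bidiagonal matrix with $1$s on its diagonal and superdiagonal, which has rank $N$; hence linear independence holds within this block. The only step that requires care is verifying that the $\NB$-chain from Lemma~\ref{lemma:chain} is extended by exactly one further, non-semistandard basis pair $p_N$, so that the bidiagonal matrix genuinely has $N+1$ independent columns; once this bookkeeping is in place the matrix argument is immediate.
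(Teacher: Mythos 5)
Your proposal is correct and follows the paper's proof: decompose by content, observe the repeated-content case gives a lone canonical basis vector, and in the distinct-content case use Lemma~\ref{lemma:chain} to see each $F_\Delta(p_\alpha)$ equals $F(p_\alpha)+F(p_{\alpha+1})$ along the neighbour chain. The only difference is that you make explicit the final step (the rank-$N$ bidiagonal matrix over the $N+1$ distinct canonical basis vectors $F(p_0),\ldots,F(p_N)$), which the paper leaves as an assertion.
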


\begin{proof}
It is clear from the canonical basis of $\bigwedge^N \Sym^d E \otimes \Sym^d E$
that 
if there is a non-trivial linear relation
between the $F_\Delta(\mbf{i}, j)$
then it may be assumed to involve only $(\mbf{i}, j)$ of the same
content (in the sense of Definition~\ref{defn:neighbour}). If $j \in \{i_1,\ldots, i_N\}$ then there
is a unique basis element $F_\Delta(\mbf{i}, j)$ of content $\{i_1,\ldots, i_N\}
\cup \{j\}$. In the remaining case, the content multiset is a set, $A$ say,
and Lemma~\ref{lemma:chain} applies. Since each $F_\Delta(\mbf{i}, j)$ of content $A$ is 
a sum of $F(\mbf{i}, j)$ and its neighbour $F\bigl( \NB(\mbf{i}, j) \bigr)$, 
they are linearly independent.
\end{proof}


\begin{lemma}\label{lemma:combinatorial}
    There are $N\binom{d+2}{N+1}$ semistandard Young tableaux  of shape $(2,1^{N-1})$ 
    and entries in $\{0,1,\ldots,d\}$. 
\end{lemma}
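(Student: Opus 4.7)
My plan is to partition the set of semistandard pairs $(\mbf{i},j)$ of shape $(2,1^{N-1})$ with entries in $\{0,1,\ldots,d\}$ according to whether the content multiset (as in Definition~\ref{defn:neighbour}) has a repeated element or not, count each piece separately, and then combine via Pascal's identity.

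First, I would handle the case where $(\mbf{i},j)$ has a repeated content element. By Definition~\ref{defn:neighbour}, this occurs exactly when $j \in \{i_1,\ldots,i_N\}$, and then $j$ is the unique repetition. Since the column $\{i_1,\ldots,i_N\}$ is then a strictly increasing sequence in $\{0,1,\ldots,d\}$ determining an $N$-subset, and any choice $j = i_\alpha$ with $\alpha \in \{1,\ldots,N\}$ satisfies the semistandard condition $i_1 \le j$, this case contributes exactly $N \binom{d+1}{N}$ tableaux.

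Next, in the remaining case the content $\{i_1,\ldots,i_N\}\cup\{j\}$ is an $(N{+}1)$-subset of $\{0,1,\ldots,d\}$. Lemma~\ref{lemma:chain} tells us that each such subset is the content of exactly $N$ semistandard pairs. Since there are $\binom{d+1}{N+1}$ subsets of size $N+1$ in $\{0,1,\ldots,d\}$, this case contributes $N\binom{d+1}{N+1}$ tableaux.

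Finally, summing these two contributions and applying Pascal's rule gives
\[
N\binom{d+1}{N+1} + N\binom{d+1}{N} \;=\; N\binom{d+2}{N+1},
\]
completing the count. There is no substantive obstacle here: the key point is simply recognising that Lemma~\ref{lemma:chain} only addresses contents with $N+1$ distinct values, and that the missing contribution from multiset contents is precisely what Pascal's identity absorbs to convert $\binom{d+1}{N+1}$ into $\binom{d+2}{N+1}$.
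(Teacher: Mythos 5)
Your count is correct, but it follows a genuinely different route from the paper's. You split the semistandard pairs $(\mbf{i},j)$ according to whether the content multiset has a repetition ($j \in \{i_1,\ldots,i_N\}$) or is a genuine $(N{+}1)$-subset, obtaining $N\binom{d+1}{N}$ and $N\binom{d+1}{N+1}$ respectively --- the latter via Lemma~\ref{lemma:chain} --- and then merge the two with Pascal's rule. The paper instead partitions the tableaux into $N$ classes $S_\alpha$ according to the position of $j$ relative to the column entries (namely $i_\alpha \le j < i_{\alpha+1}$, with the obvious convention when $\alpha = N$) and exhibits an explicit bijection from each $S_\alpha$ to the strictly increasing multi-indices in $\I{d+1}{N+1}$, so each class has size $\binom{d+2}{N+1}$ on the nose and no binomial identity is needed. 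Both arguments are sound; note only that yours leans on Lemma~\ref{lemma:chain}, whose proof the paper leaves to the reader, whereas the paper's bijection $(\mbf{i},j) \mapsto (i_1,\ldots,i_\alpha, j+1, i_{\alpha+1}+1,\ldots,i_N+1)$ is not throwaway work: it is essentially the inverse of the assignment of $\mbf{k}$ to $(\mbf{i},j)$ used in Proposition~\ref{prop:triangular}, and the same partition drives the involution in the proof of Lemma~\ref{lemma:imageInDelta}. Your approach buys a shorter, purely content-based count consistent with the chain machinery already developed; the paper's buys the explicit correspondence that the rest of the argument reuses.
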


\begin{proof}
Let $S_\alpha$ be the set of all Young tableaux $t_{(\mbf{i},j)}$ 
such that $i_\alpha \le j < i_{\alpha + 1}$ if $\alpha < N$, and such that $i_N \le j$ if $\alpha=N$. It is clear that
the set of semistandard Young tableaux of shape $(2,1^{N-1})$ is partitioned into the $N$ disjoint subsets $S_1,\ldots,S_N$.
We claim that each $S_\alpha$ has the same cardinality $\binom{d+2}{N+1}$.
To see this, we define a bijection from $S_\alpha$ to the set of strictly increasing multi-indices in 
\smash{$\I{d+1}{N+1}$} by
\[ (\mbf{i},j) \mapsto (i_1, \ldots, i_\alpha, j+1, i_{\alpha+1}+1, i_{\alpha+2}+1,\ldots, i_N+1).\]
The inverse of this map is easily shown to be
\[(k_1, \ldots, k_{N+1}) \mapsto \bigl((k_1, \ldots, k_\alpha, k_{\alpha+2} - 1, \ldots, k_{N+1} - 1 ), 
k_{\alpha +1} -1\bigr). \qedhere \]
\end{proof}

\begin{proposition}\label{prop:dims}
    The vector space $\Delta^{(2,1^{N-1})}\Sym^d \E$ has dimension $N\binom{d+2}{N+1}$ and 
    has as a basis 
\[ \bigl\{ F_\Delta(\mbf{i}, j) : \mbf{i} \in \I{d}{N}, j \in \{0,1,\ldots, d\},
\text{$(\mbf{i}, j)$ semistandard} \bigr\}. \]
\end{proposition}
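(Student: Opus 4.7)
The strategy is to combine the three preceding lemmas with a single dimension count on the defining short exact sequence, so that essentially nothing new has to be proved. By the calculation just before Lemma~\ref{lemma:chain}, each $F_\Delta(\mbf{i},j)$ with $(\mbf{i},j)$ semistandard lies in $\ker \mu_N = \Delta^{(2,1^{N-1})}\Sym^d\E$; Lemma~\ref{lemma:DeltaLinearlyIndependent} says these elements are linearly independent; and Lemma~\ref{lemma:combinatorial}, via the bijection sending $(\mbf{i},j)$ to the tableau $t_{(\mbf{i},j)}$, counts them as exactly $N\binom{d+2}{N+1}$. Hence it remains only to establish the matching upper bound $\dim \Delta^{(2,1^{N-1})}\Sym^d\E \le N\binom{d+2}{N+1}$.

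For this upper bound I would show that $\mu_N : \bigwedge^N \Sym^d\E \otimes \Sym^d\E \to \bigwedge^{N+1}\Sym^d\E$ is surjective. This is immediate: the canonical basis element $X^{d-k_1}Y^{k_1} \wedge \cdots \wedge X^{d-k_{N+1}}Y^{k_{N+1}}$ of the target, with $k_1 < \cdots < k_{N+1}$, is the $\mu_N$-image of $F\bigl((k_1,\ldots,k_N),k_{N+1}\bigr)$, and the degenerate ranges where the target vanishes are vacuous. Rank--nullity then yields
\[ \dim \Delta^{(2,1^{N-1})}\Sym^d\E = (d+1)\binom{d+1}{N} - \binom{d+1}{N+1}, \]
so the proposition reduces to the purely numerical identity
\[ (d+1)\binom{d+1}{N} - \binom{d+1}{N+1} = N\binom{d+2}{N+1}. \]
Factoring $\binom{d+1}{N}/(N+1)$ from the left-hand side collapses this to $(d+1)(N+1) - (d+1-N) = N(d+2)$, which is trivial.

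There is no genuine obstacle in this argument: the substantive content is already packaged in Lemmas~\ref{lemma:chain}, \ref{lemma:DeltaLinearlyIndependent} and~\ref{lemma:combinatorial}, and both remaining inputs (surjectivity of $\mu_N$ and the binomial identity) are one-line checks. The only small point worth flagging is the behaviour when $N > d$: both $\bigwedge^{N+1}\Sym^d\E$ and the proposed basis become empty or degenerate, and one should verify that the chain of equalities above still holds in these edge cases, which it does since every factor manifestly vanishes.
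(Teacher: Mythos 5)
Your proposal is correct and follows essentially the same route as the paper: linear independence from Lemma~\ref{lemma:DeltaLinearlyIndependent}, the count from Lemma~\ref{lemma:combinatorial}, and rank--nullity applied to $\mu_N$ to pin down $\dim\ker\mu_N = (d+1)\binom{d+1}{N}-\binom{d+1}{N+1}=N\binom{d+2}{N+1}$. The only difference is cosmetic: you make explicit the surjectivity of $\mu_N$ (which the paper leaves implicit when it takes the rank to be $\binom{d+1}{N+1}$) and verify the binomial identity by factoring rather than by the paper's Pascal-rule manipulation.
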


\begin{proof}
By Lemma~\ref{lemma:DeltaLinearlyIndependent},
the claimed basis is linearly independent.
We use a dimension counting argument to show that it spans $\Delta^{(2,1^{N-1})}\Sym^d \E$.
By Lemma \ref{lemma:combinatorial}, it suffices to show that $\dim{\Delta^{(2,1^{N-1})}\Sym^d \E}=N\binom{d+2}{N+1}$.
This follows from the rank-nullity formula applied to~\eqref{eq:muN}:
\begin{align*}
\dim \ker \mu_N &=\binom{d+1}{N} (d+1) - \binom{d+1}{N+1}\\
&= \binom{d+1}{N} (d+2) -  \left(\binom{d+1}{N} + \binom{d+1}{N+1} \right)\\
 &= \binom{d+2}{N+1} (N+1) - \binom{d+2}{N+1}\\
 & = N\binom{d+2}{N+1}. \qedhere 
\end{align*}
\end{proof}


\subsection{Definition of $\phi$}
Given $0 \le j < k$, set $[j,k) = \{j,j+1,\ldots, k-1\}$.
Given a strictly increasing multi-index $\mbf{k} \in \I{d+1}{N+1}$, we define
\begin{equation}\label{eq:box} \B(\mbf{k}) = [k_1,k_2) \times [k_2, k_3) \times \cdots \times [k_N, k_{N+1})
\subseteq \I{d}{N}. \end{equation}
For example if $d=5$ and $N=3$, we have \[ \B\bigl( (0,2,3,6) \bigr) = [0,2) \times [2,3) \times [3,6) = 
\{0,1\} \times \{2\} \times \{3,4,5\} \subseteq \I{5}{3}. \]

\begin{definition}\label{defn:phi}
Fix $d \in \N_0$ and $N \in \N$. We define 
\[ \phi : \Sym^{N-1}\E \otimes \bigwedge^{N+1} \Sym^{d+1}\E \rightarrow 
\bigwedge^N \Sym^d E \otimes \Sym^d E \] 
by
\begin{equation}\label{eq:phiAlt} \phi \bigl( X^{N-1-s}Y^s \otimes F^{(d+1)}_\wedge(\mbf{k})
\bigr) =  \sum_{\mbf{i} \in \B(\mbf{k})} F\bigl(\mbf{i}, s + |\mbf{k}|-N-|\mbf{i}|\bigr) \end{equation}
where $s \in \{0,1,\ldots, N-1\}$ and $\mbf{k} \in \I{d+1}{N+1}$ is strictly increasing.
\end{definition}

We remind the reader 
that, by~\eqref{eq:F} in Definition~\ref{defn:F}, the summand on the right hand side in~\eqref{eq:phiAlt}
is \smash{$F^{(d)}_\wedge(\mbf{i}) \otimes X^{d-(s+|\mbf{k}|-N-|\mbf{i}|)}
Y^{s+|\mbf{k}|-N-|\mbf{i}|}$}, or written out in full,
\[ X^{d-i_1}Y^{i_1} \wedge \cdots \wedge X^{d-i_N}Y^{i_N}
\otimes X^{d-(s+|\mbf{k}|-N-|\mbf{i}|)}
Y^{s+|\mbf{k}|-N-|\mbf{i}|}.\]
If $\mbf{i} \in \B(\mbf{k})$ then 
\begin{align}
|\mbf{i}| &\le (k_2-1) + \cdots + (k_{N+1} - 1) = |\mbf{k}| - k_1 - N \nonumber \\
|\mbf{i}| &\ge k_1 + \cdots + k_N = |\mbf{k}| - k_{N+1} \label{eq:kBounds} 
\end{align}
and so $s+|\mbf{k}| - N - |\mbf{i}| \ge s + k_1 \ge 0$ and 
$s+|\mbf{k}| - N - |\mbf{i}| \le s + k_{N+1} -N \le  k_{N+1}  - 1 \le d$.
Therefore $\phi$ is well-defined.
As motivation and an aide-memoire, we note that a canonical basis
element of $Y$-degree $s + |\mbf{k}|$ maps under $\phi$ to
a sum of canonical basis elements each of $Y$-degree $s + |\mbf{k}| - N$. 
It is not obvious that
$\phi$ has image in the subrepresentation $\Delta^{(2,1^{N-1})}\Sym^d \E$
of $\bigwedge^N \hskip-0.5pt V \otimes V$.
We give a short proof of this fact in
Lemma~\ref{lemma:imageInDelta}. 

\begin{samepage}
\begin{example}{\ }\label{ex:ex}
\begin{exlist}
\item By~\eqref{eq:phiAlt}, the canonical basis element
\[ X^{N-1} \otimes F_\wedge^{(d+1)}(0,1,\ldots, N) = X^{N-1} \otimes X^{d+1} \wedge X^d Y \wedge \cdots \wedge X^{d-N+1}Y^N \]
in $\Sym^{N-1}\E \otimes \bigwedge^{N+1} \Sym^{d+1}\E$
of minimal $Y$-degree $0 + 1+\cdots + N$ maps under $\phi$ to the canonical basis element
\[
 F\bigl( (0,1,\ldots, N\!-\!1), 0\bigr) =X^d \wedge X^{d-1}Y \wedge \cdots \wedge X^{d-N+1}Y^{N-1} \otimes X^d \]
in  $\Delta^{(2,1^{N-1})}\Sym^d E$ of minimal $Y$-degree $0 + 1+\cdots  + (N-1)$.
Working over $\C$, these 
vectors are highest weight for the action of the Lie algebra generator $e$ (which may be thought of
as $X \frac{\mathrm{d}}{\mathrm{d}Y}$)
in~\eqref{eq:ef}.
\item More generally the image of $X^{N-1-s}Y^s \otimes F_\wedge^{(d+1)}(i,i+1,\ldots,i+N)$ is
$F\bigl((i,i+1,\ldots,i+N-1),s+i\bigr)$. Note that since $s\in \{0,\ldots,N-1\}$, it follows
from~\eqref{eq:muN} that
this image is in $\Delta^{(2,1^{N-1})}\Sym^d\E$. 
\item The image of a canonical basis element of $\Sym^{N-1}\E \otimes \bigwedge^{N+1}\Sym^{d+1}\E$ 
typically has many summands.
For instance
take $N = 3$ and $d = 5$ and $(s, \mbf{k}) = \bigl(1 , (0,2,3,6) \bigr)$. Then
\begin{align*} \quad&\phi(X Y \otimes X^6 \wedge X^4Y^2 \wedge X^3Y^3 \wedge Y^6)
\\ &= \sum_{\mbf{i} \in [0,2) \times [2,3) \times [3,6) } F(\mbf{i}, 1 + |(0,2,3,6)| - 3 - |\mbf{i}|)  \\
&= F\bigl((0,2,3), 4\bigr) + F\bigl((1,2,3),3\bigr) + F\bigl((0,2,4),3\bigr) + F\bigl((1,2,4),2\bigr) \\
& \qquad + F\bigl((0,2,5),2\bigr) + F\bigl((1,2,5),1\bigr).\end{align*}
We invite the reader to check that the right-hand side is in $\Delta^{(2,1,1)}\Sym^5\E$.
\item Taking $N=1$ we may identify $\Sym^0 \E$ with $\mathbb{F}$ and
$\Delta^{(2)}\Sym^d \E$ with the symmetric tensors inside $\Sym^d \E \,\otimes\, \Sym^d \E$. 
The map $\phi : \bigwedge^2 \Sym^{d+1} E \rightarrow \Delta^{(2)} \Sym^d E$
is then defined by
\[ \phi (X^{d+1-k}Y^k \wedge X^{d+1-\ell}Y^\ell) = \sum_{k \le i < \ell} X^{d-i}Y^i \otimes X^{d-(k+\ell-1-i)}
Y^{k+\ell-1-i}.\]
It is clear from the powers of $Y$ in the tensor factors on the right-hand side that 
the right-hand side is
a symmetric tensor lying 
in $\Delta^{(2)}\Sym^d \E$. This is an example of the Wronskian isomorphism mentioned
at the start of the introduction.
\end{exlist}

\end{example}
\end{samepage}

\section{The map $\phi$ is an $\SL_2(\F)$-isomorphism}	

\subsection{The image of $\phi$} As defined $\phi$ has codomain $\bigwedge^N \Sym^d \E \otimes \Sym^d \E$.	

\begin{lemma}\label{lemma:imageInDelta}
The image of $\phi$ is contained in $\Delta^{(2,1^{N-1})}\Sym^d \E$.
\end{lemma}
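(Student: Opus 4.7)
My plan is to apply $\mu_N$ directly to the defining formula~\eqref{eq:phiAlt} for $\phi$ and show that the resulting sum in $\bigwedge^{N+1}\Sym^d\E$ vanishes by a signed cancellation argument. On a canonical basis element this yields
\[ \mu_N \phi \bigl( X^{N-1-s}Y^s \otimes F^{(d+1)}_\wedge(\mbf{k}) \bigr) = \sum_{\mbf{i} \in \B(\mbf{k})} X^{d-i_1}Y^{i_1} \wedge \cdots \wedge X^{d-i_N}Y^{i_N} \wedge X^{d-j}Y^{j}, \]
with $j = s + |\mbf{k}| - N - |\mbf{i}|$. Each summand is either zero (when $j \in \{i_1,\ldots,i_N\}$) or equals $(-1)^{N+1-\beta} F^{(d)}_\wedge(\mbf{k}')$, where $\mbf{k}'$ is the strictly increasing multi-index obtained by sorting $(i_1,\ldots,i_N,j)$ and $\beta$ is the resulting position of $j$ in $\mbf{k}'$.

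I would then group summands by $\mbf{k}'$ and show each group sums to zero. The crucial observation, already recorded in the remarks after~\eqref{eq:kBounds}, is that $k_1 \le j \le k_{N+1}-1$, so every $\mbf{k}'$ appearing in the sum satisfies $\mbf{k}' \subseteq [k_1, k_{N+1})$. For such a $\mbf{k}'$ I set $b_\gamma := \#\{\alpha : k'_\alpha \in [k_\gamma, k_{\gamma+1})\}$, so that $b_1 + \cdots + b_N = N+1$ across the $N$ boxes. The $\mbf{i} \in \B(\mbf{k})$ mapping to $\mbf{k}'$ are precisely the tuples $\mbf{i}_\beta = (k'_1,\ldots,\widehat{k'_\beta},\ldots,k'_{N+1})$ with $\mbf{i}_\beta \in \B(\mbf{k})$, and the latter condition holds iff deleting $k'_\beta$ leaves exactly one entry of $\mbf{k}'$ in each box.

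A short pigeonhole argument shows this is possible precisely when each box contains at least one entry of $\mbf{k}'$, in which case $\sum b_\gamma = N+1$ forces exactly one box $\gamma^*$ to contain two entries and the remaining $N-1$ boxes to contain one each. In that case the two entries in $\gamma^*$ occupy consecutive positions $\beta$, $\beta+1$ of $\mbf{k}'$, so both values of $\beta$ are valid, with associated signs $(-1)^{N+1-\beta}$ and $(-1)^{N-\beta}$ that cancel. Any other $(b_\gamma)$ profile has some empty box, which cannot be remedied by a single deletion, so the corresponding contribution to $F^{(d)}_\wedge(\mbf{k}')$ vanishes.

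The main obstacle I anticipate is isolating and correctly using the bounds $k_1 \le j \le k_{N+1}-1$: without the containment $\mbf{k}' \subseteq [k_1, k_{N+1})$, a $\mbf{k}'$-group could contain an isolated uncancelled term arising from a $j$ outside the box range. Once this containment is established, the remaining work reduces to the routine pigeonhole enumeration of box-count profiles summing to $N+1$ over $N$ boxes, together with the straightforward sign computation from moving the $j$-factor into its sorted position.
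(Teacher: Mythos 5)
Your argument is correct and is essentially the paper's own: both hinge on the bound $k_1 \le j < k_{N+1}$ from~\eqref{eq:kBounds} and then cancel the summands of $\mu_N\phi(\cdot)$ in pairs (zero or two summands per target wedge, with opposite signs). The paper merely packages the pairing as an involution on the index set --- swapping $j$ with the unique $i_\alpha$ lying in the same box $[k_\alpha, k_{\alpha+1})$, with fixed points giving repeated wedge factors --- which is exactly the pairing your deletion/box-count analysis produces.
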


\begin{proof}
Let $s \in \{0,1,\ldots, N-1\}$ and let $\mbf{k} \in \I{d+1}{N+1}$ be strictly increasing.
Let $\Omega$ be the subset of 
\[ \B(\mbf{k}) \times [0,d+1) = [k_1,k_2) \times \cdots \times [k_N, k_{N+1}) \times \{0,1,\ldots, d\}\]
of all tuples $(i_1,\ldots, i_N, j)$ such that $i_1 + \cdots + i_N + j = s + |\mbf{k}| - N$.
Writing elements of $\Omega$ as $(\mbf{i}, j)$, we have, using the notation of~\eqref{eq:F},
\begin{equation}\label{eq:muNimage} 
\mu_N \phi \bigl( X^{N-1-s}Y^s \otimes F^{(d+1)}_\wedge (\mbf{k}) \bigr) = \sum_{(\mbf{i}, j) \in \Omega} 
\mu_N F(\mbf{i}, j). \end{equation} 
By the definition of $\Delta^{(2,1^{N-1})}\Sym^d E$ from~\eqref{eq:muN}, it suffices
to show that the right-hand side vanishes. Our proof uses an involution on $\Omega$ 
related to the partition used to prove Lemma~\ref{lemma:combinatorial}. (See
after the proof for the connection with the neighbour map.)
First observe that if $(\mbf{i}, j) \in \Omega$
then 
$j = s + |\mbf{k}| - N - |\mbf{i}|$ 
and by~\eqref{eq:kBounds} we have
\[ k_1 \le j < k_{N+1}. \]
Thus, given $(\mbf{i}, j) \in \Omega$, there exists a unique $1\le \alpha \le N$ 
such that $k_\alpha \le j < k_{\alpha+1}$. 
We send $(\mbf{i},j)$ to $(i_1,\ldots, j,\ldots i_N, i_\alpha\bigr)$
where $j$ appears in position~$\alpha$, so $j$ and $i_\alpha$ are swapped.
It is clear this defines an involution in which 
$(\mbf{i}, j)$ is a fixed point if and only if $i_\alpha = j$.
Since $(i_1,\ldots, j, \ldots, i_N, i_\alpha)$ and $(i_1,\ldots, i_\alpha, \ldots, i_N, j)$ 
are either equal or 
differ by a transposition, their contributions to the sum in~\eqref{eq:muNimage} cancel.
\end{proof}

To illustrate the neighbour map in Definition~\ref{defn:neighbour}
we remark that the image of $(\mbf{i}, j)$ under the involution in the proof
of Lemma~\ref{lemma:imageInDelta}
is $\NB(\mbf{i}, j)$ if 
$k_\alpha \le i_\alpha \le j < k_{\alpha+1}$ and
$\NB^{-1}(\mbf{i}, j)$ if $k_\alpha \le j < i_\alpha < k_{\alpha+1}$.

\subsection{$\phi$ is an $\SL_2(\F)$-homomorphism}
For $\beta \in \N$ we denote by $\mbf{u}^{(\beta)}$ the unit vector
$(0,\ldots,1,\ldots,0)$ 
where the non-zero entry is in position $\beta$; the length is always $N$ or $N+1$ and will
always be clear from context.

\subsubsection*{Reduction}
We recall the technical trick in~\cite[\S 4.2]{McDowellWildon}
used to pass from $\SL_2(\F)$ to $\SL_2(\C)$. First notice that $\phi$ is a map of vector spaces, but it is defined over the integers. Let $\gamma \in \F$ be an arbitrary element and $U_\gamma = \begin{pmatrix}1 & \gamma \\ 0 & 1\end{pmatrix}$. The elements $U_\gamma$ and their transposes generate $\SL_2(\F)$.  Checking that $\varphi$ intertwines the action of $U_\gamma$ (or its transpose) amounts to an equality of polynomials in~$\gamma$ with coefficients in the image of $\Z$ in $\F$. Clearly, it suffices to check that this equality holds over the polynomial ring $\Z[\gamma]$. For this, in turn, it suffices to prove the equality for \textit{any} transcendental 
element~$\gamma$ in any field containing $\Z$ as a subring. Proving the result for $\SL_2(\C)$ certainly implies the latter condition. A basic fact from Lie theory (see for instance~\cite[Ch.~8]{FH}) then reduces the question to proving that $\phi$ commutes with the Lie algebra generators $e$ and $f$ of $\sl_2(\C)$, defined
on the $X$, $Y$ basis of $E$ by the matrices
\begin{equation} e = \left( \begin{matrix} 0 & 1 \\ 0 & 0 \end{matrix}\right), \quad
   f = \left( \begin{matrix} 0 & 0 \\ 1 & 0 \end{matrix}\right)\! . \label{eq:ef} \end{equation}
Their action on $\Sym^d \E$ is given by $e \cdot g = X \frac{\mathrm{d}g}{\mathrm{d}Y}$
and $f \cdot g = Y \frac{\mathrm{d}g}{\mathrm{d}X}$. Their action on $\bigwedge^R \Sym^{c}\E$
is then given by the usual multilinear rule for Lie
algebra actions, coming ultimately from 
\begin{equation}\label{eq:tensorAction}
x \cdot (u \otimes v) = (x \cdot u) \otimes v + u\otimes (x \cdot v).
\end{equation}
We state it below using the unit vectors $\mbf{u}^{(\gamma)}$ just defined:
\begin{align}
e \cdot F_\wedge^{(c)}(\mbf{k}) &= \sum_{\alpha = 1}^{R} \label{eq:eOnFWedge}
k_\alpha F_\wedge^{(c)}(\mbf{k} - \mbf{u}^{(\alpha)}) \\ 
f \cdot F_\wedge^{(c)}(\mbf{k}) &= \sum_{\alpha = 1}^{R} 
(c-k_\alpha) F_\wedge^{(c)}(\mbf{k} +  \mbf{u}^{(\alpha)})\nonumber  
\end{align}
for $\mbf{k} \in \I{c}{R}$. 
Here we use the convention that if \hbox{$\mbf{k} \pm \mbf{u}^{(\alpha)} \!\not\in\! \I{d+1}{N+1}$}
because either $k_\alpha < 0$ or $k_\alpha > d+1$ then \smash{$F_\wedge^{(d+1)}(\mbf{k}\pm \mbf{u}^{(\alpha)}) = 0$}. 
An application of~\eqref{eq:tensorAction} now
gives the Lie algebra action on $\Delta^{(2,1^{N-1})}\Sym^d E \subseteq
\bigwedge^N \Sym^d E \otimes \Sym^d E$, which we use in the proof of Lemma~\ref{lemma:e} below.

\subsubsection*{Technical lemma}
The following lemma isolates the key step 
in the calculation that~$\phi$ commutes with the Lie algebra
action of $e$. In it we write $\B(\mbf{k}) - \mbf{u}^{(\beta)}$ 
for $\bigl\{ \mbf{j} - \mbf{u}^{(\beta)} : \mbf{j}  \in \B(\mbf{k}) \bigr\}$.
The notation $F(\mbf{i}, j)$ was introduced in Definition~\ref{defn:F}.
The first
paragraph of the proof below that checks that, in every summand, the second
component of the pair is in
$\{0,1,\ldots, d\}$, and so the expression is well-defined.
 This
technical check could be skipped by instead regarding the $F(\mbf{i}, j)$ 
as formal symbols; the proof then goes through. 

\begin{lemma}\label{lemma:technical}
Let $s \in \{0,1,\ldots, N-1\}$.
Let $\mbf{k} \in \I{d+1}{N+1}$ be strictly increasing. 
Then for any $\vt$ such that $|\mbf{k}| - N - 1 \le t \le |\mbf{k}| - 2$ we have
\[ \begin{split} &
\sum_{\alpha=1\rule{0pt}{6.75pt}}^{N+1} \sum_{\mbf{i} \in \B(\mbf{k}-\mbf{u}^{(\alpha)})} \!\!\!
k_\alpha \PA{\vt}{\mbf{i}} \\
&\qquad = \sum_{\beta=1\rule{0pt}{6.75pt}}^N \sum_{\mbf{j} \in \B(\mbf{k}) - \mbf{u}^{(\beta)}} \!\!\!\! 
(j_\beta+1)\PA{\vt}{\mbf{j}}
+ \sum_{\mbf{j} \in \B(\mbf{k})} \! \bigl( |\mbf{k}| -N - |\mbf{j}| \bigr) 
\PA{\vt}{\mbf{j}}. \end{split}
\]
\end{lemma}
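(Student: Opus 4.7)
The plan is to expand both sides as linear combinations of the canonical basis elements $\PA{t}{\mbf{j}} = F(\mbf{j},t-|\mbf{j}|)$ of $\bigwedge^N \Sym^d\E \otimes \Sym^d\E$, where $\mbf{j}$ ranges over strictly increasing multi-indices in $\I{d}{N}$, and to match the coefficient of each $\PA{t}{\mbf{j}}$. A preliminary check shows that every $\mbf{i} \in \B(\mbf{k}-\mbf{u}^{(\alpha)})$ is automatically strictly increasing (the strict inequalities inherited from $\mbf{k}$ suffice), while every failure of strict increase for $\mbf{j} \in \B(\mbf{k}) - \mbf{u}^{(\beta)}$ produces a repeated entry and so makes $F^{(d)}_\wedge(\mbf{j}) = 0$. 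Hence it is enough to compare coefficients of $\PA{t}{\mbf{j}}$ for each strictly increasing $\mbf{j}$, and the identity will then follow formally for any $t$ in the asserted range.

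The natural split is according to whether $\mbf{j}\in\B(\mbf{k})$. When $\mbf{j} \in \B(\mbf{k})$, inspection of the interval constraints shows that $\mbf{j}\in\B(\mbf{k}-\mbf{u}^{(\alpha)})$ holds iff $j_{\alpha-1}\ne k_\alpha-1$ for $\alpha\in\{2,\ldots,N\}$, iff $j_N\ne k_{N+1}-1$ for $\alpha=N+1$, and automatically for $\alpha=1$. Telescoping gives
\[ \sum_{\alpha=1}^{N+1} k_\alpha\,[\mbf{j}\in\B(\mbf{k}-\mbf{u}^{(\alpha)})] = |\mbf{k}| - \sum_{\beta=1}^N k_{\beta+1}\,[j_\beta=k_{\beta+1}-1]. \]
On the right-hand side, for $\mbf{j}\in\B(\mbf{k})$ one has $\mbf{j}+\mbf{u}^{(\beta)}\in\B(\mbf{k})$ iff $j_\beta\ne k_{\beta+1}-1$, so the first sum equals $|\mbf{j}|+N - \sum_\beta (j_\beta+1)[j_\beta=k_{\beta+1}-1]$; adding $|\mbf{k}|-N-|\mbf{j}|$ and using $j_\beta+1=k_{\beta+1}$ on the support of the indicator yields the same expression as above.

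When $\mbf{j}\notin\B(\mbf{k})$, a contribution on the left requires $\mbf{j}\in\B(\mbf{k}-\mbf{u}^{(\alpha)})\setminus\B(\mbf{k})$ for some $\alpha$, which by inspection of the interval constraints forces $j_\alpha=k_\alpha-1$ with all other entries in the normal range $[k_\gamma,k_{\gamma+1})$, uniquely determining $\alpha\in\{1,\ldots,N\}$ and contributing $k_\alpha$. On the right, $\mbf{j}+\mbf{u}^{(\beta)}\in\B(\mbf{k})$ with $\mbf{j}\notin\B(\mbf{k})$ forces $j_\beta=k_\beta-1$ with the remaining entries normal, contributing $j_\beta+1=k_\beta$; here $\beta$ is likewise unique, and the identification $\alpha=\beta$ matches the two contributions. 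The degenerate boundary case $k_1=0$, $j_1=-1$ is harmless since both $k_1=0$ and the $Y^{-1}$ appearing in $F^{(d)}_\wedge(\mbf{j})$ cause the corresponding summands to vanish. The main obstacle is purely book-keeping: the identity is in essence the combinatorial shadow of the equation $\phi\circ e=e\circ\phi$ applied to $X^{N-1-s}Y^s\otimes F^{(d+1)}_\wedge(\mbf{k})$, with the Lie-algebra action on the $Y^s$ factor separated off, so the algebraic content is forced.
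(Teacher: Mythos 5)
Your proof is correct and is essentially the paper's own argument presented dually: where the paper decomposes the index sets $\B(\mbf{k}-\mbf{u}^{(\alpha)})$ and $\B(\mbf{k})-\mbf{u}^{(\beta)}$ into $\B(\mbf{k})$ together with the boundary slices where one coordinate equals $k_\alpha-1$, you fix a basis vector and compare its coefficient on the two sides via membership indicators, and the resulting case analysis ($\mbf{j}\in\B(\mbf{k})$ versus the slice cases) is the same computation. Your handling of the degenerate boundary terms via vanishing coefficients matches the paper's own well-definedness check (the paper notes that $t-|\mbf{j}|=-1$ can occur only in the second right-hand sum, where the coefficient $|\mbf{k}|-N-|\mbf{j}|$ is then zero), so no gap remains.
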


\begin{proof}
If $\mbf{i} \in \B(\mbf{k} - \mbf{u}^{(\alpha)})$
then, by replacing $\mbf{k}$ with $\mbf{k} - \mbf{u}^{(\alpha)}$ 
in~\eqref{eq:kBounds} we have 
$|\mbf{k}| - k_{N+1} -1 \le |\mbf{i}| \le |\mbf{k}| - k_1 - N-1$
and so
\[ |\mbf{k}| - (d+1)-1 \le |\mbf{i}| \le |\mbf{k}| - N-1.\]
Hence $t - |\mbf{k}| + N+1 \le t- |\mbf{i}| \le t - |\mbf{k}| + d+2$
and the hypothesis on $t$ implies that $0 \le t - |\mbf{i}| \le d$.
Thus each $F(\mbf{i}, t-|\mbf{i}|)$ is well-defined. This also
shows each $F(\mbf{j}, t-|\mbf{j}|)$ in the first summand on the right-hand
side is well-defined; in the second summand we have instead
$t - |\mbf{k}| + N \le t- |\mbf{j}| \le t - |\mbf{k}| + d+1$
and now if $t = |\mbf{k}| - N - 1$ we may have $-1 = t - |\mbf{j}|$,
but in this case the coefficient is $|\mbf{k}| - N - |\mbf{j}| 
= |\mbf{k}| - N - (1 + t) = 0$, so the ill-defined summand can be ignored.
We are now ready to begin the main part of the proof.

Given $x \in \{0,1,\ldots, d\}$ and $1 \le \alpha \le N$, we set
\[ \CC{\alpha}{x}(\mbf{k}) = [k_1,k_2) \times \cdots
\times [k_{\alpha-1},k_\alpha)  \times \{x\} \times [k_{\alpha+1},k_{\alpha+2}) \times \cdots \times [k_N, k_{N+1}) \]
where $\{x\}$ in position $\alpha$ replaces the interval $[k_\alpha,k_{\alpha+1})$ in position
$\alpha$ of the product defining $\B(\mbf{k})$ in~\eqref{eq:box}.
Observe that
\begin{align}
\B(\mbf{k}\!-\!\mbf{u}^{(1)}) &= [k_1-1,k_2) \!\times\! [k_2,k_3) \!\times\! \cdots \!\times\! [k_N, k_{N+1}) 
= \B(\mbf{k}) \cup \CC{1}{k_1-1}, \nonumber \\
\!\B(\mbf{k}\!-\!\mbf{u}^{(N+1)}) &= [k_1,k_2) \!\times\! \cdots \!\times\! [k_{N-1},k_N)
\!\times\! [k_N,k_{N+1}\!-\!1) = \B(\mbf{k}) \,\backslash\, \CC{N}{k_{N+1}-1} \nonumber
\intertext{and, if $2 \le \alpha \le N$, then}
\B(\mbf{k}\!-\!\mbf{u}^{(\alpha)}) &= [k_1,k_2) \!\times\! \cdots \!\times\! [k_{\alpha-1},k_\alpha\!-\!1)
\!\times\! [k_\alpha\!-\!1, k_\alpha) \!\times\! \cdots \times [k_N,k_{N+1}) \nonumber \\
&= \B(\mbf{k}) \cup \CC{\alpha}{k_\alpha-1}(\mbf{k}) \,\backslash\, \CC{\alpha-1}{k_\alpha-1}(\mbf{k}). 
\label{eq:BtoCLeft}
\intertext{Thus by setting $\CC{0}{x}(\mbf{k}) = \CC{{N+1}}{x}(\mbf{k}) = \varnothing$, we may unify
the cases so that~\eqref{eq:BtoCLeft}
holds for all $1 \le \alpha \le N+1$. By~\eqref{eq:BtoCLeft}, the left-hand side in the lemma is}
&\hspace*{-0.65in}|\mbf{k}| \sum_{\mbf{i} \in \B(\mbf{k})} \PA{\vt}{\mbf{i}}
+ \sum_{\alpha=1\rule{0pt}{6.75pt}}^{N+1} \sum_{\mbf{i} \in \CCs{\alpha}{k_\alpha-1}(\mbf{k})} k_\alpha 
\PA{\vt}{\mbf{i}} \nonumber \\[-3pt]
& \hspace*{1.47in} - \sum_{\alpha=1\rule{0pt}{6.75pt}}^{N+1} \sum_{\mbf{i} \in \CCs{\alpha-1}{k_\alpha-1}(\mbf{k})} k_\alpha 
\PA{\vt}{\mbf{i}} .\label{eq:L}
\intertext{Similarly to~\eqref{eq:BtoCLeft} we have}
\B(\mbf{k}) - \mbf{u}^{(\beta)} &= [k_1,k_2) \times \cdots \times [k_\beta\!-\!1,k_{\beta+1}\!-\!1) \times \cdots
\times [k_N,k_{N+1})
\nonumber
\\ &= \B(\mbf{k}) \cup \CC{\beta}{k_\beta-1}(\mbf{k}) \,\backslash\, \CC{\beta}{k_{\beta+1}-1}(\mbf{k}).
\label{eq:BtoCRight}
\end{align}
By~\eqref{eq:BtoCRight}  the 
first summand in the right side in the lemma is
\[ \begin{split}
\sum_{\beta=1\rule{0pt}{6.75pt}}^N \Bigl( \sum_{\mbf{j} \in \B(\mbf{k})}
(j_\beta +1)\PA{\vt}{\mbf{j}} &+ \!\!\!\! \sum_{\mbf{j} \in \CCs{\beta}{k_\beta-1}(\mbf{k})}\!\!\!(k_\beta\!-\!1\!+\!1)\PA{\vt}{\mbf{j}} \\[-3pt]
& \quad\ \ - \!\!\!\! \sum_{\mbf{j} \in \CCs{\beta}{k_{\beta+1}-1}(\mbf{k})} \!\!\!(k_{\beta+1}\!-\!1\!+\!1) 
\PA{\vt}{\mbf{j}}\Bigr). \end{split}
\]
Since $\sum_{\beta=1}^N (j_\beta+1) = |\mbf{j}| + N$, and the 
second summand on the right-hand side is $\sum_{\mbf{j} \in \B(\mbf{k})} 
\bigl( |\mbf{k}|-N-|\mbf{j}| \bigr)\PA{\vt}{\mbf{j}}$,
the right-hand side in the lemma simplifies to
\begin{equation}\label{eq:R}
\begin{split}
|\mbf{k}| \sum_{\mbf{j} \in \B(\mbf{k})} \PA{\vt}{\mbf{j}}
&+ \sum_{\beta=1\rule{0pt}{6.75pt}}^N \sum_{\mbf{j} \in \CCs{\beta}{k_\beta-1}(\mbf{k})}  k_\beta \PA{\vt}{\mbf{j}}
\\[-3pt]
&\qquad\qquad\  - \sum_{\beta=1\rule{0pt}{6.75pt}}^N \sum_{\mbf{j} \in \CCs{\beta}{k_{\beta+1}-1}(\mbf{k})}  k_{\beta+1} \PA{\vt}{\mbf{j}}. \end{split}
\end{equation}
The lemma now follows by comparing~\eqref{eq:L} and~\eqref{eq:R}; since $\CC{0}{x}(\mbf{k}) = \CC{{N+1}}{x}(\mbf{k}) = \varnothing$ the three summands agree in the order
written.
\end{proof}

\subsubsection*{The map $\phi$ commutes with $e$}

The Lie algebra element $e \in \sl_2(\C)$  (which may be thought of
as $X \frac{\mathrm{d}}{\mathrm{d}Y}$)
acts on $\Sym^d \!E$ by $e\cdot X^{d-j}Y^j = jX^{d-j+1}Y^{j-1}$.

\begin{lemma}\label{lemma:e}
The map $\phi$ defined over the complex numbers commutes with the Lie algebra action of $e \in \sl_2(\C)$.
\end{lemma}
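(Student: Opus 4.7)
The plan is to unwind both $\phi \circ e$ and $e \circ \phi$ on a canonical basis element $X^{N-1-s}Y^s \otimes F^{(d+1)}_\wedge(\mbf{k})$, write each side as a sum of terms $F(\mbf{i}, t - |\mbf{i}|)$ with common total $Y$-degree $t = s + |\mbf{k}| - N - 1$, and observe that the resulting identity is exactly the content of Lemma~\ref{lemma:technical}.

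First I would apply the Leibniz-style tensor rule~\eqref{eq:tensorAction} to write
\[ e \cdot \bigl( X^{N-1-s}Y^s \otimes F^{(d+1)}_\wedge(\mbf{k})\bigr) = s X^{N-s}Y^{s-1} \otimes F^{(d+1)}_\wedge(\mbf{k}) + X^{N-1-s}Y^s \otimes \sum_{\alpha=1}^{N+1} k_\alpha F^{(d+1)}_\wedge\bigl(\mbf{k} - \mbf{u}^{(\alpha)}\bigr), \]
using the formula~\eqref{eq:eOnFWedge}. Applying $\phi$ termwise and noting that for every summand the shift $s \mapsto s-1$ or $\mbf{k} \mapsto \mbf{k} - \mbf{u}^{(\alpha)}$ reduces the quantity $s + |\mbf{k}| - N$ by exactly one, both pieces produce sums of $F(\mbf{i}, t - |\mbf{i}|)$ with the same $t = s + |\mbf{k}| - N - 1$. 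Explicitly, $\phi(e\cdot -)$ becomes
\[ s \sum_{\mbf{i} \in \B(\mbf{k})} F\bigl(\mbf{i}, t - |\mbf{i}|\bigr) + \sum_{\alpha=1}^{N+1} k_\alpha \sum_{\mbf{i} \in \B(\mbf{k} - \mbf{u}^{(\alpha)})} F\bigl(\mbf{i}, t - |\mbf{i}|\bigr). \]

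Next I would compute $e \cdot \phi\bigl(X^{N-1-s}Y^s \otimes F^{(d+1)}_\wedge(\mbf{k})\bigr)$ by applying $e$ to each summand $F(\mbf{i}, j)$ with $j = s + |\mbf{k}| - N - |\mbf{i}|$. Using~\eqref{eq:tensorAction} and~\eqref{eq:eOnFWedge} on $\bigwedge^N \Sym^d\E \otimes \Sym^d\E$, this gives
\[ \sum_{\mbf{i} \in \B(\mbf{k})} \Bigl[ \sum_{\beta=1}^N i_\beta F\bigl(\mbf{i} - \mbf{u}^{(\beta)}, j\bigr) + j\, F\bigl(\mbf{i}, j-1\bigr) \Bigr]. \]
In the first inner sum I would reindex by $\mbf{j} = \mbf{i} - \mbf{u}^{(\beta)}$, turning $\mbf{i} \in \B(\mbf{k})$ into $\mbf{j} \in \B(\mbf{k}) - \mbf{u}^{(\beta)}$, the coefficient into $j_\beta + 1$, and the second slot into $t - |\mbf{j}|$ (since $|\mbf{j}| = |\mbf{i}| - 1$ and $j = t - |\mbf{i}| + 1$). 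In the second inner sum I would split $j = (|\mbf{k}| - N - |\mbf{i}|) + s$, extracting an $s$-coefficient.

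Collecting everything, the $s \sum_{\mbf{i} \in \B(\mbf{k})} F(\mbf{i}, t - |\mbf{i}|)$ terms on both sides cancel, and the remaining identity is precisely
\[ \sum_{\alpha=1}^{N+1}\sum_{\mbf{i} \in \B(\mbf{k} - \mbf{u}^{(\alpha)})} k_\alpha F\bigl(\mbf{i}, t - |\mbf{i}|\bigr) = \sum_{\beta=1}^{N}\sum_{\mbf{j} \in \B(\mbf{k}) - \mbf{u}^{(\beta)}} (j_\beta + 1) F\bigl(\mbf{j}, t - |\mbf{j}|\bigr) + \sum_{\mbf{j} \in \B(\mbf{k})} \bigl(|\mbf{k}| - N - |\mbf{j}|\bigr) F\bigl(\mbf{j}, t - |\mbf{j}|\bigr), \]
which is Lemma~\ref{lemma:technical}. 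Since $s \in \{0, \ldots, N-1\}$, the value $t = s + |\mbf{k}| - N - 1$ satisfies the hypothesis $|\mbf{k}| - N - 1 \le t \le |\mbf{k}| - 2$, so the lemma applies and the proof concludes.

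The main obstacle is purely bookkeeping: tracking the two distinct index shifts, one on the domain side (shifting $\mbf{k}$ by $\mbf{u}^{(\alpha)}$, range $\alpha \in \{1, \ldots, N+1\}$) and one on the codomain side (shifting $\mbf{i}$ by $\mbf{u}^{(\beta)}$, range $\beta \in \{1, \ldots, N\}$), and recognising that these are exactly the two data on opposite sides of Lemma~\ref{lemma:technical}. The asymmetric $s$-contribution from $e$ acting on $X^{N-1-s}Y^s$ must be seen to match the $s$-piece arising from $j = (|\mbf{k}| - N - |\mbf{i}|) + s$; this is the only place one could easily lose track of a sign or coefficient, but the bookkeeping works out cleanly.
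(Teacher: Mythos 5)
Your proposal is correct and follows essentially the same route as the paper: expand $\phi(e\cdot x)$ and $e\cdot\phi(x)$ on canonical basis elements, reindex the wedge-shift terms, cancel the common $s$-multiple, and reduce the remaining identity to Lemma~\ref{lemma:technical} with $t = s + |\mbf{k}| - N - 1$, checking the hypothesis on $t$. The paper merely organises this as a single chain of equalities from $\phi(e\cdot x)$ to $e\cdot\phi(x)$ rather than matching the two expansions, which is a presentational difference only.
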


\begin{proof}
We compare $e \cdot \phi(x)$ and $\phi(e \cdot x)$ 
for $x$ in the canonical basis of $\Sym^{N-1}\!E \otimes \bigwedge^{N+1}
\Sym^{d+1}\!E$. 
Let $s \in \{0,1,\ldots, N-1\}$ and let $\mbf{k} \in \I{d+1}{N+1}$ be strictly increasing. For ease of notation
we set $w = s + |\mbf{k}| - N$. By~\eqref{eq:tensorAction}
and~\eqref{eq:eOnFWedge} and the definition of $\phi$ in Definition~\ref{defn:phi},
then the technical lemma to obtain the third equality, and finally~\eqref{eq:tensorAction} 
and~\eqref{eq:eOnFWedge} again
we have
\begin{align*}
&\phi\bigl( e \cdot (X^{N-1-s}Y^s \otimes F_\wedge^{(d+1)}(\mbf{k}) \bigr) \\ &\quad = \phi\bigl(
sX^{N-s}Y^{s-1} \otimes F^{(d+1)}_\wedge(\mbf{k})
+ X^{N-1-s}Y^s \otimes \sum_{\alpha=1}^{N+1} k_\alpha F^{(d+1)}_\wedge (\mbf{k} - \mbf{u}^{(\alpha)}) \bigr) 
\\
&\quad = s \sum_{\mbf{i} \in \B(\mbf{k})} \PA{s+|\mbf{k}| - 1 -N}{\mbf{i}} \\[-9pt]
& \hspace*{1in} + \sum_{\alpha=1}^{N+1} k_\alpha \sum_{\mbf{i} \in B(\mbf{k} - \mbf{u}^{(\alpha)})} 
\PA{s+|\mbf{k}| - 1 -N}{\mbf{i}}
\\[-9pt]
&\quad = 
\sum_{\beta=1\rule{0pt}{7pt}}^N \sum_{\mbf{j} \in B(\mbf{k})- \mbf{u}^{(\beta)}} (j_\beta+1)
\PA{w-1}{\mbf{j}} \\ &\hspace*{1in} + \sum_{\mbf{j} \in B(\mbf{k})} \bigl( s + |\mbf{k}| - N - |\mbf{j}| \bigr)
\PA{w-1}{\mbf{j}} \\
&\quad =
\sum_{\beta=1\rule{0pt}{6pt}}^N \sum_{\mbf{i} \in B(\mbf{k})} i_\beta
\P{w - 1}{\mbf{i} \!-\! \mbf{u}^{(\beta)}}{w\!-\!|\mbf{i}|}  
+\!\!\! \sum_{\mbf{j} \in B(\mbf{k})} \!\!\bigl( s \!+\! |\mbf{k}| \!-\! N \!-\! |\mbf{j}| \bigr)
\PA{w - 1}{\mbf{j}} \\
&\quad = e \cdot \sum_{\mbf{j} \in B(\mbf{k})} \PA{s + |\mbf{k}|-N}{\mbf{j}}  \\
&\quad = e \cdot \phi\bigl( X^{N-1-s}Y^s \otimes F_\wedge^{(d+1)}(\mbf{k}) \bigr) .
\end{align*}
The hypothesis of the technical lemma that $|\mbf{k}| - N - 1 \le w-1 \le |\mbf{k}| - 2$
follows easily from the definition of $w$.
\end{proof}

\subsubsection*{Duality}
To show that $\phi$ commutes with $f$ we use a duality argument. This appears to the authors to be more conceptual
and involve less calculation than adapting the proof already given for $e$, although this would also be possible.
Let $\mbf{e} = (d+1,\ldots, d+1) \in \I{d+1}{N+1}$ and define 
$\tau \in \End_\F\bigl( \Sym^{N-1}E \otimes \bigwedge \Sym^{N+1}\Sym^{d+1}(E) \bigr)$ by linear extension~of
\begin{align*}
 \tau \bigl( X^{N-1-j}Y^j \otimes F_\wedge^{(d+1)}(\mbf{i}) \bigr)
&= X^j Y^{N-1-j} \otimes F_\wedge^{(d+1)}(\mbf{e} - \mbf{i}) .
\intertext{Let $\mbf{d} = (d,\ldots, d) \in \I{d}{N}$ and
define $\tau' \in \End_\F\bigl( \bigwedge^N\hskip-0.5pt \Sym^d \otimes \Sym^d E \bigr)$
by linear extension of}
\tau' \bigl( F_\wedge^{(d)}(\mbf{j}) \otimes X^{d-\ell}Y^\ell  \bigr) &= 
F_\wedge^{(d)} (\mbf{d} - \mbf{j}) \otimes X^\ell Y^{d-\ell}. \end{align*}

\begin{lemma}\label{lemma:intertwining}
We have $e \tau  =  \tau f$, $\tau' e = f \tau'$ and $\tau' \phi = \pm \phi \tau$.
\end{lemma}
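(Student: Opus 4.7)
My proof plan treats the three identities separately; the first two are routine, while the third requires careful sign bookkeeping for wedge reversals.

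For $e\tau = \tau f$ and $\tau' e = f\tau'$, the conceptual point is that both $\tau$ and $\tau'$ are realizations of the $X \leftrightarrow Y$ swap of $E$, extended to the relevant wedge and tensor powers, and that this swap interchanges the operators $e = X\partial/\partial Y$ and $f = Y\partial/\partial X$. Explicitly, I would apply both sides to a canonical basis vector and distribute the action via \eqref{eq:tensorAction} and \eqref{eq:eOnFWedge}. On the monomial factor, $e\tau(X^{N-1-j}Y^j) = e(X^jY^{N-1-j}) = (N-1-j)X^{j+1}Y^{N-2-j} = \tau f(X^{N-1-j}Y^j)$. On the wedge factor, $F^{(d+1)}_\wedge(\mbf{e}-\mbf{i})$ differs from $F^{(d+1)}_\wedge(\mbf{i})$ only by swapping $X$ and $Y$ in each of the $N+1$ tensor positions, and the coefficient $i_\alpha$ produced by $e$ on $F^{(d+1)}_\wedge(\mbf{i})$ is reproduced as $(d+1)-(d+1-i_\alpha)=i_\alpha$ by $f$ acting on $F^{(d+1)}_\wedge(\mbf{e}-\mbf{i})$. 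The identity for $\tau'$ is proved identically.

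For $\tau'\phi = \pm\phi\tau$, I would apply both sides to the canonical basis element $x = X^{N-1-s}Y^s \otimes F^{(d+1)}_\wedge(\mbf{k})$ for $\mbf{k}$ strictly increasing, and compare summand-by-summand. Unpacking Definition~\ref{defn:phi} directly gives
\[ \tau'\phi(x) = \sum_{\mbf{j} \in \B(\mbf{k})} F^{(d)}_\wedge(\mbf{d}-\mbf{j}) \otimes X^{m} Y^{d-m}, \qquad m = s + |\mbf{k}| - N - |\mbf{j}|. \]
On the other hand $\tau(x) = X^sY^{N-1-s} \otimes F^{(d+1)}_\wedge(\mbf{e}-\mbf{k})$, and since $\mbf{e}-\mbf{k}$ is strictly decreasing, reversing it to the strictly increasing $\tilde{\mbf{k}} = (d+1-k_{N+1},\ldots,d+1-k_1)$ costs a sign $(-1)^{N(N+1)/2}$ from the $(N+1)$-fold reversal. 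Applying $\phi$ then produces $(-1)^{N(N+1)/2}$ times a sum indexed by $\mbf{i} \in \B(\tilde{\mbf{k}})$.

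The combinatorial heart of the argument is the bijection $\B(\tilde{\mbf{k}}) \leftrightarrow \B(\mbf{k})$ given by $i_\beta = d - j_{N+1-\beta}$, under which $|\mbf{i}| = Nd - |\mbf{j}|$. Together with $|\tilde{\mbf{k}}| = (N+1)(d+1) - |\mbf{k}|$, a short calculation shows that the $Y$-power in each $\phi\tau(x)$-summand, namely $(N-1-s) + |\tilde{\mbf{k}}| - N - |\mbf{i}|$, simplifies to $d-m$, matching the corresponding $\tau'\phi(x)$-summand. Finally, $F^{(d)}_\wedge(\mbf{i})$ is precisely the reversal of $F^{(d)}_\wedge(\mbf{d}-\mbf{j})$ as a wedge of $N$ vectors, contributing a further sign $(-1)^{N(N-1)/2}$. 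Multiplying, the total sign is $(-1)^{N(N+1)/2+N(N-1)/2} = (-1)^{N^2} = (-1)^N$, giving $\phi\tau = (-1)^N \tau'\phi$, i.e.\ $\tau'\phi = (-1)^N\phi\tau$, which is the claimed $\pm$. The main obstacle is the double bookkeeping of the two wedge reversals, one of length $N+1$ and one of length $N$, together with the reindexing of $\B$; once the correct bijection $i_\beta = d-j_{N+1-\beta}$ is identified, the rest of the argument is direct substitution.
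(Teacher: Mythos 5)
Your proof is correct and follows essentially the same route as the paper: the first two identities via the observation that the coordinate swap $X\leftrightarrow Y$ intertwines $e$ and $f$ factor by factor, and the third by applying both sides to a canonical basis element, reversing $\mbf{e}-\mbf{k}$ to a strictly increasing tuple at the cost of the sign $(-1)^{N(N+1)/2}$, reindexing $\B(\mbf{e}-\mbf{k}^{\rev})$ by $\B(\mbf{k})$, and absorbing a second reversal sign $(-1)^{N(N-1)/2}$ from the length-$N$ wedge. Your explicit total sign $(-1)^{N^2}=(-1)^N$ agrees with the paper's $\epsilon_N\epsilon_{N+1}$, and all the exponent bookkeeping checks out.
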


\begin{proof}
Observe that $\tau$ and $\tau'$ are defined by multilinear extension of the 
maps $\theta_c : \Sym^c \E \rightarrow \Sym^c \E$ defined on the canonical basis by
\[ \theta_c (X^{c-j}Y^j) = X^j Y^{c-j}. \]
Since 
$\theta_c \bigl(e  \cdot X^{c-j}Y^j\bigr) = \theta_c (j X^{c-j+1} Y^{j-1}) = j X^{j-1} Y^{c-j+1} =
f \cdot X^j Y^{c-j} = f \cdot \bigl(\theta_c (X^{c-j}Y^j) \bigr)$
we have $\theta_c\, e = f \theta_c$.
By multilinearity, this implies the first two equations in the lemma. For the third,
let $\epsilon_R$ denote the sign of the permutation of $\{1,\ldots, R\}$ reversing
the positions in an $R$-tuple. (Thus $\epsilon_R = -1$ if $R \equiv 2, 3$ mod $4$,
and otherwise $\epsilon_R = 1$.)
Let $s \in \{0,1,\ldots, N-1\}$, let $\mbf{k} \in \I{d+1}{N+1}$ be strictly
increasing, and let $\mbf{k}^\rev = (k_{N+1},\ldots, k_1)$ be the reverse of $\mbf{k}$.
Observe that $\mbf{e} - \mbf{k}^\rev$ is strictly increasing and
$|\mbf{e}-\mbf{k}^\rev| = (d+1)(N+1) - |\mbf{k}|$.
Set
\begin{equation} w = N-1-s - |\mbf{e} - \mbf{k}^\mathrm{rev}| 
= dN + N + d -s -|\mbf{k}|. \label{eq:wIntertwining} \end{equation}
By~\eqref{eq:box},
\[ \B(\mbf{e}-\mbf{k}^\rev) = [d+1-k_{N+1}, d+1-k_N) \times \cdots \times [d+1-k_2, d+1-k_1). \]
Thus $(i_1,\ldots,i_N) \in \B(\mbf{e}-\mbf{k}^\rev)$ if and only if
$(d+1-i_N, \ldots, d+1-i_1) \in (k_1,k_2] \times \cdots \times (k_N,k_{N+1}]$,
so if and only if 
$(d-i_N,\ldots, d-i_1) \in \B(\mbf{k}) = [k_1,k_2) \times \cdots \times [k_N,k_{N+1})$. 
Using this to step from line 3 to line 4 below, and the
alternative definition of $\phi$ in~\eqref{eq:phiAlt} for the immediately preceding step, we have
\begin{align*}
\phi \tau  \bigl(& X^{N-1-s}Y^s \otimes F^{(d+1)}_\wedge(\mbf{k}) \bigr) \\ 
&\quad = \phi \bigl(X^s Y^{N-1-s} \otimes \epsilon_{N+1} 
F^{(d+1)}_\wedge (\mbf{e}-\mbf{k}^\rev) \bigr) \\
&\quad = \sum_{\mbf{i} \in \B(\mbf{e} - \mbf{k}^\rev)} \epsilon_{N+1} 
F(\mbf{i}, w - |\mbf{i}|) \\ 
&\quad = \epsilon_{N+1}\sum_{\mbf{j} \in \B(\mbf{k})}  
F\bigl( (d-j_N, \ldots, d-j_1), w - (dN - |\mbf{j}|) \bigr) \\
&\quad = \epsilon_N\epsilon_{N+1} \tau' \bigl( \sum_{\mbf{j} \in \B(\mbf{k})} 
F\bigl( (j_1,\ldots, j_N), d- (w-(dN - |\mbf{j}|)) \bigr)  \\
&\quad = \epsilon_N\epsilon_{N+1} \tau' \bigl( \sum_{\mbf{j} \in \B(\mbf{k})} 
F\bigl( (j_1,\ldots, j_N), s + |\mbf{k}| - N - |\mbf{j}| \bigr) \bigr) \\
&\quad = \epsilon_N\epsilon_{N+1} \tau' \phi \bigl( X^{N-1-s} Y^s \otimes F_\wedge^{(d+1)}(\mbf{k}) \bigr) 
\end{align*}
where the penultimate equality uses~\eqref{eq:wIntertwining}.
Since $\epsilon_N \epsilon_{N+1} \in \{-1,1\}$ only depends on $N$, this completes the proof.
\end{proof}

\begin{proposition}The map $\phi$ defined over the complex numbers is an $\sl_2(\C)$-homomorphism.
\end{proposition}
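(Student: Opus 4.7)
The plan is to leverage the three intertwining identities collected in Lemma~\ref{lemma:intertwining} together with the result of Lemma~\ref{lemma:e}. Since Lemma~\ref{lemma:e} already gives $\phi e = e\phi$, the only thing left to verify is that $\phi f = f\phi$. The key observation, which turns the $f$-identity into a formal consequence of the $e$-identity, is that $\tau$ and $\tau'$ are involutions: on a basis vector $\tau$ swaps $Y$-degree $j$ with $N-1-j$ and simultaneously sends $\mbf{i}$ to $\mbf{e}-\mbf{i}$, and applying it twice recovers the original vector (and similarly for $\tau'$). I would record this as a short preliminary observation.

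With $\tau^2 = \mathrm{id}$ and $(\tau')^2 = \mathrm{id}$, the three identities of Lemma~\ref{lemma:intertwining} may be rewritten as $f = \tau e \tau$, $f\tau' = \tau' e$, and $\phi\tau = \epsilon\,\tau'\phi$ for some sign $\epsilon = \pm 1$ depending only on $N$. The plan is then to substitute the first of these into $\phi f$ and push everything through using the known relations:
\begin{align*}
\phi f &= \phi\tau\, e\,\tau = \epsilon\, \tau'\phi\, e\, \tau
= \epsilon\, \tau' e\, \phi\tau = \epsilon\, f\tau'\, \phi\tau = \epsilon^2\, f\tau'\tau'\phi = f\phi,
\end{align*}
where successively one uses $\phi\tau = \epsilon\tau'\phi$, Lemma~\ref{lemma:e}, $\tau' e = f\tau'$, $\phi\tau = \epsilon\tau'\phi$ again, and finally $(\tau')^2 = \mathrm{id}$. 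Since $\epsilon^2 = 1$, the unknown sign disappears and $\phi f = f\phi$ falls out.

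Combined with Lemma~\ref{lemma:e}, this shows that $\phi$ commutes with both generators $e$ and $f$ of $\sl_2(\C)$, and hence is an $\sl_2(\C)$-homomorphism. There is essentially no obstacle here beyond bookkeeping; the only step that requires a moment's attention is verifying the involution property of $\tau$ and $\tau'$, which amounts to checking $\mbf{e} - (\mbf{e}-\mbf{i}) = \mbf{i}$ and the analogous statement for $\tau'$. I would present the argument by stating the involution property first, then performing the six-step chain of substitutions in a single display as above.
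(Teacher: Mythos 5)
Your proposal is correct and is essentially the same argument as the paper's: the paper likewise deduces $\phi f = f\phi$ by the chain $\phi f = \phi\tau\tau f = \phi\tau e\tau = \pm\tau'\phi e\tau = \pm\tau' e\phi\tau = \pm f\tau'\phi\tau = f\phi\tau\tau = f\phi$, using Lemma~\ref{lemma:e}, Lemma~\ref{lemma:intertwining}, and the (easily verified) fact that $\tau$ and $\tau'$ are involutions. The only cosmetic difference is that the paper cancels the sign via $\tau^2=\mathrm{id}$ where you use $(\tau')^2=\mathrm{id}$.
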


\begin{proof}
By Lemma~\ref{lemma:e}, $\phi$ commutes with the Lie algebra 
action of $e$. By 
this lemma and Lemma~\ref{lemma:intertwining} we have
\[ \phi f = \phi \tau \tau \!f = \phi \tau e \tau = \pm \tau' \phi e \tau = \pm \tau' e \phi \tau = 
\pm f \tau' \phi \tau
= f \phi \tau \tau = f \phi, \]
and so $\phi$ also commutes with the Lie algebra action of $f$. Since $\sl_2(\C)$ is generated by~$e$ and~$f$
the proposition follows.
\end{proof}

\subsection{The map $\phi$ is an $\SL_2(\F)$-isomorphism}
Fix $N \in \N$ and $d \in \N_0$. The canonical basis of  
$\Sym^{N-1}\E \,\otimes\, \bigwedge^{N+1}\Sym^{d+1}\E$
is indexed by pairs $(s,\mbf{k})$ with 
$s \in \{0,1,\ldots, N-1\}$ and $\mbf{k} \in \I{d+1}{N+1}$ strictly increasing. Whenever we write a pair $(s,\mbf{k})$,
it satisfies these conditions. By~\eqref{eq:phiAlt}, the vectors
\begin{equation} v_{(s,\mbf{k})} = \sum_{\mbf{i} \in \B(\mbf{k})} 
F(\mbf{i}, w - |\mbf{i}|)
\label{eq:v} \end{equation} 
where $w = s + |\mbf{k}| - N$ 
are the images under $\phi$ of the canonical basis of 
$\Sym^{N-1}\E \otimes \bigwedge^{N+1}\Sym^{d+1}\E$.

By Lemma~\ref{lemma:imageInDelta} $\phi$ has image contained in 
$\Delta^{(2,1^{N-1})}\Sym^d E$ and
by Proposition~\ref{prop:dims}, this space has
 the same dimension as the domain of $\phi$. Therefore
to complete the proof that $\phi$ is an isomorphism of $\SL_2(\F)$-representations, it suffices to show
that the vectors $v_{(s, \mbf{k})}$ span $\Delta^{(2,1^{N-1})}\Sym^d E$.

\subsubsection*{Preliminary results on chains}
Our proof of this requires a close analysis of the neighbour
map in Definition~\ref{defn:neighbour} and the chains in Lemma~\ref{lemma:chain}.
Recall from Definition~\ref{defn:neighbour} that the content
of a pair $(\mbf{i}, j)$ is the multiset $\{i_1,\ldots, i_N\} \cup \{j\}$.

\begin{lemma}\label{lemma:boxChain}
Suppose that $(\mbf{i}, j) \in \I{d}{N} \times \{0,1,\ldots, d\}$ is a semistandard
pair such that $F(\mbf{i}, j)$ has a non-zero coefficient in
$v_{(s, \mbf{k})}$ and such that $j \not \in \{i_1, \ldots, i_N\}$. Let $(\mbf{i'},j') = \NB^m(\mbf{i},j)$ for some $m \in \N$, and suppose
that $F(\mbf{i'},j')$ has a non-zero coefficient in $v_{(s,\mbf{k})}$. Then
 $m=1$.

\end{lemma}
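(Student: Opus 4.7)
The plan is to use the explicit chain structure from Lemma~\ref{lemma:chain} to describe the $\NB$-iterates of $(\mbf{i},j)$ positionally, and then derive a contradiction if $m \geq 2$ from incompatible interval constraints in $\B(\mbf{k})$.

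First, I would label the common content of the $\NB$-iterates as a set $A = \{a_1 < \cdots < a_{N+1}\}$ (genuinely a set, since $j \notin \{i_1,\ldots,i_N\}$), and identify the unique $p \in \{2,\ldots,N+1\}$ with $j = a_p$; then $\mbf{i}$ is $A$ with $a_p$ removed and listed in increasing order. By Lemma~\ref{lemma:chain}, iterating $\NB$ simply walks the missing element down the chain: $\NB^m(\mbf{i},j) = (\mbf{i}^{(m)}, a_{p-m})$ where $\mbf{i}^{(m)}$ is $A$ with $a_{p-m}$ removed. The useful consequence of this description is that $\mbf{i}$ and $\mbf{i}^{(m)}$ agree at all positions $\alpha \leq p-m-1$ and $\alpha \geq p$, while for each $\alpha \in \{p-m, \ldots, p-1\}$, $\mbf{i}$ has entry $a_\alpha$ at position $\alpha$ and $\mbf{i}^{(m)}$ has entry $a_{\alpha+1}$.

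Next I would translate the non-vanishing hypothesis. By~\eqref{eq:v}, $v_{(s,\mbf{k})}$ is a sum of distinct canonical basis elements $F(\mbf{i}', w - |\mbf{i}'|)$ indexed by $\mbf{i}' \in \B(\mbf{k})$, so the coefficient of $F(\mbf{i}',j')$ is non-zero precisely when $\mbf{i}' \in \B(\mbf{k})$ and $j' = w - |\mbf{i}'|$. The latter equality is automatic for $\NB$-iterates because $\NB$ preserves content, so the hypothesis collapses to requiring both $\mbf{i}$ and $\mbf{i}^{(m)}$ to lie in $\B(\mbf{k}) = [k_1,k_2) \times \cdots \times [k_N, k_{N+1})$. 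Combining this with the positional description, for every $\alpha \in \{p-m, \ldots, p-1\}$ both $a_\alpha$ (from $\mbf{i}$) and $a_{\alpha+1}$ (from $\mbf{i}^{(m)}$) must sit in $[k_\alpha, k_{\alpha+1})$.

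If $m \geq 2$, I would then pick two consecutive indices $\alpha, \alpha+1$ in $\{p-m, \ldots, p-1\}$: the $\alpha$-slot constraint on $\mbf{i}^{(m)}$ yields $a_{\alpha+1} < k_{\alpha+1}$, while the $(\alpha+1)$-slot constraint on $\mbf{i}$ yields $a_{\alpha+1} \geq k_{\alpha+1}$, a contradiction. Since $m \geq 1$ by hypothesis ($m \in \N$), this forces $m = 1$. The only real obstacle is the bookkeeping in the second paragraph: once the positional effect of $\NB^m$ is correctly pinned down, the contradiction is a single line.
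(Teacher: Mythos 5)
Your proof is correct and takes essentially the same route as the paper: both arguments reduce to the observation that some element of the content occupies two different positions in two tuples that must both lie in $\B(\mbf{k})$, forcing incompatible half-open interval constraints $a < k_\gamma \le a$. The paper packages this as showing $(\mbf{i'},j') = \NB(\mbf{i},j)$ and then invoking injectivity of $\NB$ on the chain, whereas you derive a direct contradiction from $m \ge 2$; your explicit positional description of $\NB^m$ (the missing element $a_{p-m}$ shifting the block of entries in positions $p-m,\ldots,p-1$) is, if anything, the more careful rendering of the key step.
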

\begin{proof}
    By the definition of the neighbour map, 
    $\mbf{i'}$ is strictly increasing, and since $(\mbf{i},j)$ and $(\mbf{i'},j')$ have the same content, we may write
\begin{align*}
(\mbf{i}, j)    &= \bigl( (i_1,\ldots, i_{\alpha-1}, i_\alpha, i_{\alpha+1}, \ldots, i_N), j \bigr) \\
(\mbf{i'}, j') &= \bigl( (i_1,\ldots, i_{\alpha-1}, j, i_{\alpha+1}, \ldots, i_N), i_\alpha \bigr)
\end{align*}
for some $\alpha \in \{1,\ldots,N\}$. Since $j\not \in \{i_1,\ldots, i_N\}$, by Lemma~\ref{lemma:chain}, the neighbour map $\NB$ is injective on multi-indices of content $\{i_1,\ldots, i_N, j\}$.
 Therefore it suffices to prove that $(\mbf{i'},j') = \NB (\mbf{i},j)$. To see this, we need to show that $\alpha$ is maximal such that $i_\alpha \le j$. It is clear from Lemma~\ref{lemma:chain} that
  repeated applications of $\NB$ replace entries in the first element of the semistandard pair $(\mbf{i},j)$ by larger or equal entries. Thus, $j \ge i_\alpha$. If $\alpha = N$, we are done. Otherwise, since $\mbf{i'} \in \B(\mbf{k})$, we have $ j<k_{\alpha+1} \le i_{\alpha+1}$. Since $i_\alpha \le i_{\alpha+1}$, the result now follows. 
\end{proof}

By Proposition~\ref{prop:dims}, $\Delta^{(2,1^{N-1})}\Sym^d E$ has as
a basis the $F_\Delta( \mbf{i}, j )$ for semistandard pairs
$(\mbf{i}, j) \in \I{d}{N} \times \{0,1,\ldots, d\}$.

\begin{lemma}\label{lemma:atMostOneDeltaPerContent}
In the expression of $v_{(s, \mbf{k})}$ as a linear combination of the
$F_\Delta(\mbf{i}, j)$ for semistandard pairs
$(\mbf{i}, j) \in \I{d}{N} \times \{0,1,\ldots, d\}$
 there is at most one $(\mbf{i}, j)$ of any given content.
\end{lemma}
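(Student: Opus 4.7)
The strategy is to decompose $v_{(s,\mbf{k})}$ according to the content multiset of each summand and analyse each piece separately. Because the map $\mu_N$ of~\eqref{eq:muN} respects the grading of $\bigwedge^N\Sym^d\E \otimes \Sym^d\E$ by content multiset (it sends the content-$C$ component into the content-$C$ component of $\bigwedge^{N+1}\Sym^d\E$, which vanishes when $C$ has a repeat), the subspace $\Delta^{(2,1^{N-1})}\Sym^d\E$ is itself content-graded. By Lemma~\ref{lemma:imageInDelta}, each content-$C$ projection of $v_{(s,\mbf{k})}$ therefore already lies in $\Delta^{(2,1^{N-1})}\Sym^d\E$, and it suffices to show, for each fixed $C$ of size $N+1$, that this projection is either zero or a single basis vector $F_\Delta(\mbf{i},j)$.

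If $C$ has a repeated element, then since every summand in~\eqref{eq:v} has strictly increasing first component, the repetition must occur at the second slot $j$. Hence there is a unique strictly-increasing pair $(\mbf{i},j)$ of content $C$, it is automatically semistandard, and $F_\Delta(\mbf{i},j)=F(\mbf{i},j)$ by the first case of~\eqref{eq:FDelta}, so the conclusion is immediate.

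The main case is $C=\{c_0<c_1<\cdots<c_N\}$ with all entries distinct. Setting $\mbf{i}_\ell$ to be $C\setminus\{c_\ell\}$ sorted and $G_\ell=F(\mbf{i}_\ell,c_\ell)$, one checks using Lemma~\ref{lemma:chain} that the semistandard pairs of content $C$ are exactly $(\mbf{i}_\ell,c_\ell)$ for $\ell\ge 1$, that $\NB(\mbf{i}_\ell,c_\ell)=(\mbf{i}_{\ell-1},c_{\ell-1})$, and hence that $F_\Delta(\mbf{i}_\ell,c_\ell)=G_\ell+G_{\ell-1}$. Let $T\subseteq\{0,1,\ldots,N\}$ index the $G_\ell$ appearing in $v_{(s,\mbf{k})}$. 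The desired conclusion reduces to showing that $T$ is either empty or of the form $\{\ell,\ell+1\}$, in which case the content-$C$ projection equals $0$ or $F_\Delta(\mbf{i}_{\ell+1},c_{\ell+1})$.

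Controlling $T$ is the crux of the argument, and it will follow from two independent ingredients. First, for any two distinct elements $\ell>\ell'$ of $T$ we necessarily have $\ell\ge 1$, so $(\mbf{i}_\ell,c_\ell)$ is semistandard with $c_\ell\notin\mbf{i}_\ell$ and $G_{\ell'}=\NB^{\ell-\ell'}(G_\ell)$; Lemma~\ref{lemma:boxChain} then forces $\ell-\ell'=1$, so $|T|\le 2$ and any two elements of $T$ are consecutive. Second, requiring $\sum_{\ell\in T}G_\ell$ to lie in the $N$-dimensional span of the vectors $F_\Delta(\mbf{i}_\ell,c_\ell)=G_\ell+G_{\ell-1}$ for $\ell=1,\ldots,N$ translates, by matching coefficients of each $G_m$ and imposing the boundary conditions $a_0=a_{N+1}=0$, into the alternating-sum identity $\sum_{\ell\in T}(-1)^\ell=0$; this excludes the singleton case and leaves only $T=\emptyset$ or $T=\{\ell,\ell+1\}$, completing the proof.
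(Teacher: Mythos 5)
Your proof is correct and follows essentially the same route as the paper's: both split according to whether the content multiset is a genuine set, reduce to the neighbour chain of Lemma~\ref{lemma:chain}, and invoke Lemma~\ref{lemma:boxChain} to force the surviving indices to be consecutive. The only difference is organisational: the paper finishes by taking the minimal and maximal indices of the $F_\Delta$-support and applying Lemma~\ref{lemma:boxChain} to them, whereas you bound the $F$-basis support $T$ directly and then exclude $|T|=1$ with the alternating-sum functional; both steps are valid.
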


\begin{proof}
If $j \in \{i_1,\ldots, i_N\}$ then 
the unique basis element from the $F_\Delta$ basis whose semistandard pair has  content $\{i_1,\ldots, i_N\} \cup \{j\}$
is $F_\Delta(\mbf{i},j)$; since this
element is $F(\mbf{i}, j)$, in this case, the lemma obviously holds.

In the remaining case, the content multiset is a set, and by Lemma~\ref{lemma:chain}, there are $N$ semistandard pairs of content $\{i_1,\ldots, i_N\} \cup \{j\}$
forming a chain
\begin{equation}\label{eq:chain2} 
(\mbf{i}^{(1)}, j^{(1)}), \ldots, (\mbf{i}^{(\beta)}, j^{(\beta)}),  \ldots, 
(\mbf{i}^{(\gamma)}, j^{(\gamma)}), \ldots, (\mbf{i}^{(N)}, j^{(N)}) \end{equation}
with $d \ge j^{(1)} > \ldots > j^{(N)} \ge 1$. (To be explicit, Lemma~\ref{lemma:chain}
gives $j^{(1)} = j$
and $j^{(\alpha)} = i_{N-\alpha+2}$ for $2 \le \alpha \le N$.)
Choose $\beta$ minimal such that $F_\Delta(\mbf{i}^{(\beta)}, j^{(\beta)})$
has a non-zero coefficient in $v_{(s, \mbf{k})}$ and 
$\gamma$ maximal such that $F_\Delta(\mbf{i}^{(\gamma)}, j^{(\gamma)})$
has a non-zero coefficient in $v_{(s, \mbf{k})}$.

Observe that $F(\mbf{i}^{(\beta)},j^{(\beta)})$ and 
$F\bigl(\NB(\mbf{i}^{(\gamma)},j^{(\gamma)})\bigr) 
= F\bigl(\NB^{\gamma-\beta+1}(\mbf{i}^{(\beta)},j^{(\beta)})\bigr)$ 
both have a non-zero coefficient in $v_{(s,\mbf{k})}$ in the $F$ basis. By Lemma~\ref{lemma:boxChain} applied to $(\mbf{i}^{(\beta)},j^{(\beta)})$, we have  $\beta = \gamma$, as required.
\end{proof}

\subsubsection*{Triangularity}
It will be useful to denote the content multiset of a semistandard pair $(\mbf{i}, j)
\in \I{d}{N} \times \{0,1,\ldots, N\}$ by $\CN(\mbf{i}, j)$.
Given distinct multisets $\{a_1,\ldots, a_{N+1}\}$ and $\{b_1,\ldots, b_{N+1}\}$ written
so that $a_1 \le \ldots \le a_{N+1}$ and
$b_1 \le \ldots \le b_{N+1}$, we order them  
lexicographically so that
\[ \{a_1,\ldots, a_{N+1} \} < \{b_1, \ldots, b_{N+1} \} \]
if and only if $a_1 = b_1, \ldots, a_{\gamma-1} = b_{\gamma-1}$ 
and $a_\gamma < b_\gamma$, where $\gamma$ is minimal
such that $a_\gamma \not= b_\gamma$.
Using this we define a total order $\preceq$ on semistandard pairs.

\begin{definition}
Let $\prec$ be the total order on semistandard pairs in
$\I{d}{N} \times \{0,1,\ldots, d\}$
defined by  
$(\mbf{i}, j) \prec (\mbf{i}', j')$ if and only if \emph{either}
$\CN(\mbf{i}, j) < \CN(\mbf{i}', j')$
\emph{or} $\CN(\mbf{i}, j) = \CN(\mbf{i}', j')$ and $j > j'$.
\end{definition}

For example the least element under $\preceq$ is $\bigl( (0,1,\ldots, N-1), 0\bigr)$
and the greatest is $\bigl( (d-N+1,\ldots, d), d \bigr)$.
The final condition $j > j'$ is chosen so that the chain in~\eqref{eq:chain2} is
strictly increasing in the $\preceq$ total order. This can be seen in the following
example.

\begin{example}\label{ex:triangular}
Take $N = 2$ and $d = 4$. The map
$\phi$
takes each canonical basis element $v_{(s, \mbf{k})} 
\in E \otimes \bigwedge^3\Sym^4 \E$ of $Y$-degree $9$ to 
a sum of canonical basis elements of $\bigwedge^2 \Sym^4\E \otimes \Sym^4\E$
each of $Y$-degree $7$. 
The relevant $(\mbf{i}, j)$ labelling the rows of the matrix
of $\phi$ restricted to the weight space have $|\mbf{i}| + j = 7$ and 
are totally ordered
\[ \cp{(0,3)}{4} \prec\cp{(0,4)}{3} \prec\cp{(1,2)}{4} \prec \cp{(1,4)}{2}  \prec \cp{(1,3)}{3}
\prec \cp{(2,3)}{2} \]
by $\preceq$.
By Lemma~\ref{lemma:atMostOneDeltaPerContent}, at
most one $F_\Delta(\mbf{i}, j)$ of any given content appears in each row.
This can be seen in the two $2 \times 2$ identity blocks in the relevant
block of the matrix of~$\phi$ shown below.
We deliberately use a non-standard notation in which 
$\cdot$ denotes an entry known to be zero by Lemma~\ref{lemma:atMostOneDeltaPerContent},
 and empty spaces
denote zeros from the lower-triangularity implied by the following proposition.

\begin{center}\let\quad\bigquad
\setlength\arraycolsep{3.7pt}
\begin{tikzpicture}[x=1cm,y=-1cm]
		\node at (0.85,0){$\begin{matrix} \ddp{1}{(0,3,5)} & \ddp{0}{(0,4,5)} 
		& \ddp{1}{(1,2,5)} & \ddp{0}{(1,3,5)} 
		& \ddp{1}{(1,3,4)} & \ddp{0}{(2,3,4)} \end{matrix}$};
		\node at (0,2){$
\bordermatrix{ & \cr
	\cp{(0,3)}{4} & 1                             \cr 
	\cp{(0,4)}{3} & \cdot & 1                     \cr 
	\cp{(1,2)}{4} & 0     & 0 & 1                 \cr
	\cp{(1,4)}{2} & 1     & 1 & \cdot & 1         \cr
	\cp{(1,3)}{3} & 1     & 0 & 1     & 1 & 1     \cr
	\cp{(2,3)}{2} & 1     & 0 & 0     & 1 & 1 & 1 \cr}$};
\end{tikzpicture}
\end{center}
\end{example}


\begin{proposition}\label{prop:triangular}
Let $(\mbf{i}, j) \in \I{d}{N} \times \{0,1,\ldots, d\}$ be semistandard.
Let $\alpha$ 
be maximal such that $i_\alpha \le j$. 
Set $\mbf{k} = (i_1, \ldots, i_\alpha, j+1, i_{\alpha+1}+1, \ldots, i_N + 1)$.
Then
\[ v_{(\alpha-1, \mbf{k})} 
= F_\Delta(\mbf{i}, j) + v \]
where $v$ is a linear combination of basis elements $F_\Delta(\mbf{i}',j')$
for semistandard $(\mbf{i}', j')$ such that $(\mbf{i}', j') \succ (\mbf{i}, j)$.
\end{proposition}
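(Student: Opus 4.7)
The plan is to analyze the expansion $v_{(\alpha-1,\mbf{k})}=\sum_{\mbf{i}'\in\B(\mbf{k})}F(\mbf{i}',w-|\mbf{i}'|)$, identify the summands whose content equals $\CN(\mbf{i},j)$, show that they assemble into $F_\Delta(\mbf{i},j)$, and prove that every other summand contributes to an $F_\Delta$-basis element of strictly larger content in the $\prec$ order.

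First I would dispose of the routine verifications: $\mbf{k}$ is strictly increasing (thanks to $i_{\alpha-1}<i_\alpha\le j<i_{\alpha+1}$) and $\mbf{i}\in\B(\mbf{k})$, and then $w=(\alpha-1)+|\mbf{k}|-N=|\mbf{i}|+j$ after substituting $|\mbf{k}|=|\mbf{i}|+j+(N-\alpha+1)$. Thus the $\mbf{i}'=\mbf{i}$ summand is exactly $F(\mbf{i},j)$. When $j>i_\alpha$, setting $\mbf{i}^{(2)}=(i_1,\ldots,i_{\alpha-1},j,i_{\alpha+1},\ldots,i_N)$, which is the first component of $\NB(\mbf{i},j)$, a direct check shows $\mbf{i}^{(2)}\in\B(\mbf{k})$ and $w-|\mbf{i}^{(2)}|=i_\alpha$, so the $\mbf{i}'=\mbf{i}^{(2)}$ summand equals $F\bigl(\NB(\mbf{i},j)\bigr)$, and the two combine via~\eqref{eq:FDelta} to give $F_\Delta(\mbf{i},j)$. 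When $j=i_\alpha$ the two multi-indices coincide and $F_\Delta(\mbf{i},j)=F(\mbf{i},j)$ is the single matching summand.

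The main step is to show that for every $\mbf{i}'\in\B(\mbf{k})\setminus\{\mbf{i},\mbf{i}^{(2)}\}$ the content multiset of $(\mbf{i}',w-|\mbf{i}'|)$ strictly exceeds the sorted $\CN(\mbf{i},j)=(i_1,\ldots,i_\alpha,j,i_{\alpha+1},\ldots,i_N)$ in the lex order. Combined with Lemma~\ref{lemma:atMostOneDeltaPerContent}, this immediately yields the desired triangularity, since any other $F_\Delta(\mbf{i}'',j'')$ appearing in the expansion would then have $\CN(\mbf{i}'',j'')>\CN(\mbf{i},j)$, hence $(\mbf{i}'',j'')\succ(\mbf{i},j)$. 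I would let $\beta_0$ be the smallest index with $i'_{\beta_0}\neq i_{\beta_0}$ and, by maximising $|\mbf{i}'|$ coordinatewise subject to $i'_\beta=i_\beta$ for $\beta<\beta_0$ and $i'_{\beta_0}\ge i_{\beta_0}+1$, establish the bound $j'\ge i_{\beta_0}+\alpha-\beta_0$ when $\beta_0\le\alpha$, while $\beta_0>\alpha$ forces $|\mbf{i}'|<|\mbf{i}|$ and hence $j'>j$. A short case analysis then locates the first position where the sorted content of $(\mbf{i}',j')$ strictly exceeds $\CN(\mbf{i},j)$: position $\beta_0$ when $\beta_0<\alpha$, position $\alpha$ when $\beta_0=\alpha$, and position $\alpha+1$ when $\beta_0>\alpha$. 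In each case the lower bound on $j'$ is just enough to guarantee that the first $\beta_0-1$ (respectively $\alpha$) sorted entries of $\CN(\mbf{i}',j')$ still equal the corresponding entries of $\CN(\mbf{i},j)$.

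The main obstacle is the borderline case $\beta_0=\alpha$: here the bound $j'\ge i_\alpha$ is saturated and the minimum value $j'=i_\alpha$ is achieved \emph{precisely} at $\mbf{i}'=\mbf{i}^{(2)}$, so the explicit exclusion of $\mbf{i}^{(2)}$ is essential in order to conclude $j'>i_\alpha$ and thus $s_\alpha>c_\alpha$. This coincidence is, in turn, the structural reason why $\mbf{i}^{(2)}$ must be paired with $\mbf{i}$ in the first step to produce $F_\Delta(\mbf{i},j)$.
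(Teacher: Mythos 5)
Your proof is correct and follows essentially the same route as the paper's: identify the two summands $F(\mbf{i},j)$ and $F\bigl(\NB(\mbf{i},j)\bigr)$ coming from $\mbf{i},\mbf{i}^{(2)}\in\B(\mbf{k})$, then show every other content arising from $\B(\mbf{k})$ is lexicographically larger, so that Lemma~\ref{lemma:atMostOneDeltaPerContent} gives the triangularity. The only organisational difference is that you run the case analysis on the first coordinate $\beta_0$ where $\mbf{i}'$ differs from $\mbf{i}$ and argue directly on the $F$-summands, whereas the paper argues by contradiction on the semistandard pairs indexing the $F_\Delta$'s that appear; your handling of the borderline case $\beta_0=\alpha$ via the exclusion of $\mbf{i}^{(2)}$ is exactly the point that needs care, and you get it right.
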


\begin{proof}
The quantity $s + |\mbf{k}| - N - |\mbf{i}|$ in the definition of $\phi$
in~\eqref{eq:phiAlt} is 
\[ (\alpha-1) + |\mbf{k}| - N - |\mbf{i}| = (\alpha-1) + (|\mbf{i}| + N - \alpha + j +1) - N - |\mbf{i}|
= j. \]
Since
\begin{equation}\label{eq:iDefn} i_1 = k_1, \ldots, i_\alpha = k_\alpha, i_{\alpha+1} = k_{\alpha+2}-1, \ldots, i_N = k_{N+1}-1 \end{equation}
we have $\mbf{i} \in \B(\mbf{k})$ and so by~\eqref{eq:phiAlt},
$F( \mbf{i}, j )$
has coefficient $1$
in the expression of  $ v_{(\alpha-1, \mbf{k} )}$ in the $F$ basis.
Moreover, we have
\[ \NB(\mbf{i}, j) = \bigl( (i_1, \ldots, i_{\alpha-1}, j, i_{\alpha+1}, \ldots, i_N), i_\alpha \bigr) \] 
and since $k_\alpha = i_\alpha \le j = k_{\alpha+1}-1$,
we also have $(i_1, \ldots, i_{\alpha-1}, j , i_{\alpha+1}, \ldots, i_N) \in \B(\mbf{k})$.
Therefore $F\bigl( \NB(\mbf{i}, j) \bigr)$ has coefficient $1$
in the expression of  $ v_{(\alpha-1, \mbf{k} )}$ in the $F$ basis.
It follows by Lemma~\ref{lemma:atMostOneDeltaPerContent}
that $F_\Delta(\mbf{i}, j)$ has coefficient~$1$ when $v_{(\alpha-1, \mbf{k})}$ is written
in the $F_\Delta$ basis, and 
the remaining summands in the $F_\Delta$ basis have different content to $(\mbf{i},j)$.

Suppose for contradiction that such a pair $(\mbf{i}',j')$ satisfies $(\mbf{i}',j') \prec (\mbf{i},j)$, or equivalently, since $\CN(\mbf{i}',j') \not= \CN(\mbf{i},j)$, 
that $\CN(\mbf{i}',j') < \CN(\mbf{i},j)$. Let
\begin{align*}
    \mbf{c}&=(c_1,\ldots,c_{N+1}) = (i_1, \ldots, i_{\alpha}, j , i_{\alpha+1}, \ldots, i_N) \\
    \mbf{c'}&=(c'_1,\ldots,c'_{N+1}) =(i_1',\ldots,i'_{\beta}, j', i'_{\beta+1}, \ldots, i_N')
\end{align*}
where $\beta$ is maximal such that $i'_\beta \le j'$. 
 Thus both $\mbf{c}$ and $\mbf{c'}$ are (weakly) increasing.


Suppose first of all 
that $\alpha < \beta$. 
From~\eqref{eq:iDefn} we have $i_\gamma = k_\gamma$ if $1 \le \gamma \le \alpha$
and by definition of $\mbf{k}$, we have $k_{\alpha+1} = j + 1$.
Thus $\mbf{i'} \in \B(\mbf{k})$ implies that 
$c'_\gamma = i'_\gamma \ge k_\gamma = i_\gamma = c_\gamma$ 
for $1 \le \gamma \le \alpha$ 
and $c'_{\alpha+1} = i'_{\alpha+1} \ge  k_{\alpha+1} = j+1$. 
Since we are assuming that $c(\mbf{i'},j') < c(\mbf{i},j)$, we must have $c'_\gamma = c_\gamma$ for $1 \le \gamma \le \alpha$. However,
$c'_{\alpha+1} \ge j+1 > j = c_{\alpha+1}$. 
Thus $\CN(\mbf{i'},j') > \CN(\mbf{i},j)$, a contradiction.

In the remaining case we have $\alpha \ge \beta$. 
Similarly to the first case, we have
$c'_\gamma = i'_\gamma \ge k_\gamma = i_\gamma =  
c_\gamma$ for $1 \le \gamma \le \beta$, and thus
our assumption that $\CN(\mbf{i}', j') < \CN(\mbf{i}, j)$  implies 
that $c'_\gamma = c_\gamma$ for $1\le \gamma \le \beta$ and $c'_{\beta+1} \le c_{\beta+1}$. 
Since $\CN(\mbf{i}, j) \not= \CN(\mbf{i'}, j')$ 
and $|\mbf{i}|+j = |\mbf{i'}|+j'$, there exists 
$\delta$ such that $\beta+2 \le \delta \le N+1$ 
and $c'_\delta > c_\delta$.
Now, since $\delta \ge \beta+2$, we have $c'_\delta 
= i'_{\delta -1}$. In turn, $\mbf{i'}\in \B(\mbf{k})$ implies $i'_{\delta -1} < k_\delta$. By definition of $\mbf{k}$, we also have  
$k_\gamma  \in \{c_\gamma, c_\gamma+1\}$ for $1 \le \gamma \le N+1$. In particular,
$k_\delta \le c_\delta + 1$. Putting all these together, we have \[c'_\delta = i'_{\delta-1} < k_\delta \le c_\delta+1\]
and hence $c'_\delta \le c_\delta$,
a final contradiction.

Therefore, any $F_\Delta(\mbf{i}', j')$ appearing with non-zero coefficient
in the expression of $v_{(\alpha-1, \mbf{k})}$ in the $F_\Delta$ basis
satisfies $(\mbf{i}', j') \succ (\mbf{i}, j)$, as required.
\end{proof}

We can now easily complete the proof of our main theorem.

\begin{corollary}
The vectors $v_{(s, \mbf{k})}$ for $s \in \{0,1,\ldots, N-1\}$
and $\mbf{k} \in \I{d+1}{N+1}$
span $\Delta^{(2,1^{N-1})}\Sym^d\E$. 
\end{corollary}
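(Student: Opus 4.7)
The plan is to prove the spanning statement by reverse induction on the total order $\preceq$ defined just before Example~\ref{ex:triangular}. The goal is to show that every element of the basis for $\Delta^{(2,1^{N-1})}\Sym^d\E$ furnished by Proposition~\ref{prop:dims}, namely every $F_\Delta(\mbf{i},j)$ with $(\mbf{i},j)$ semistandard, lies in the span of the $v_{(s,\mbf{k})}$.

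First I would verify the base case. The $\preceq$-maximal semistandard pair is $\bigl((d-N+1,\ldots,d),d\bigr)$. Applying Proposition~\ref{prop:triangular} to this pair (with $\alpha=N$ and the associated $\mbf{k}=(d-N+1,\ldots,d,d+1)$) yields
\[ v_{(N-1,\mbf{k})} = F_\Delta\bigl((d-N+1,\ldots,d),d\bigr) + v \]
where $v$ is a linear combination of $F_\Delta(\mbf{i}',j')$ strictly larger than the maximum; such a sum is forced to be empty, so $v=0$ and the maximum basis element lies in the span.

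Next comes the inductive step. Fix a semistandard pair $(\mbf{i},j)$ and assume that every $F_\Delta(\mbf{i}',j')$ with $(\mbf{i}',j')\succ(\mbf{i},j)$ is already known to lie in the span of the $v_{(s',\mbf{k}')}$. Let $\alpha$ be maximal with $i_\alpha\le j$ and let $\mbf{k}=(i_1,\ldots,i_\alpha,j+1,i_{\alpha+1}+1,\ldots,i_N+1)$, which is strictly increasing and lies in $\I{d+1}{N+1}$. Proposition~\ref{prop:triangular} then gives
\[ F_\Delta(\mbf{i},j) = v_{(\alpha-1,\mbf{k})} - v, \]
where $v$ is a linear combination of $F_\Delta(\mbf{i}',j')$ with $(\mbf{i}',j')\succ(\mbf{i},j)$. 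By the inductive hypothesis $v$ lies in the span of the $v_{(s',\mbf{k}')}$, and hence so does $F_\Delta(\mbf{i},j)$.

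This is essentially bookkeeping: all of the real combinatorial and representation-theoretic work has been done in Proposition~\ref{prop:triangular} and in the preparatory Lemmas~\ref{lemma:boxChain} and~\ref{lemma:atMostOneDeltaPerContent}. No further step poses any obstacle. The reverse induction terminates after finitely many steps because the semistandard pairs $(\mbf{i},j)\in\I{d}{N}\times\{0,1,\ldots,d\}$ form a finite set totally ordered by $\preceq$. Combining this with Proposition~\ref{prop:dims}, which asserts that the $F_\Delta(\mbf{i},j)$ form a basis of $\Delta^{(2,1^{N-1})}\Sym^d\E$, completes the proof that the $v_{(s,\mbf{k})}$ span this space, and hence (together with Lemma~\ref{lemma:imageInDelta} and the equality of dimensions) the proof of Theorem~\ref{thm:main}.
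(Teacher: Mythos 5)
Your proof is correct and is essentially the paper's argument: the paper orders both bases compatibly with $\preceq$ and observes that Proposition~\ref{prop:triangular} makes the change-of-basis matrix lower uni-triangular, hence invertible, which is exactly the global form of your reverse induction. The only difference is presentational (explicit downward induction versus inverting a uni-triangular matrix), and both rest entirely on Proposition~\ref{prop:triangular} and Proposition~\ref{prop:dims}.
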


\begin{proof}
We order the codomain basis elements from least to greatest under $\preceq$, and we order the domain basis elements so that if $F_\Delta(\mathbf{i}, j)$ is in the $m^\mathrm{th}$ position in the codomain order, then its corresponding $X^{N-1-(\alpha-1)}Y^{\alpha -1} \otimes F_{\wedge}^{(d+1)}(\mathbf{k})$ (where $\alpha$ and $\mbf{k}$ are as defined in Proposition~\ref{prop:triangular}) is in the $m^\mathrm{th}$ position in the domain order. (This is illustrated in Example~\ref{ex:triangular}.)

With this ordering, by  Proposition~\ref{prop:triangular}, the matrix expressing each $v_{(s, \mathbf{k})}$ in the $F_\Delta(\mathbf{i}, j)$ basis is lower uni-triangular. Its inverse is also uni-triangular, and it expresses each $F_\Delta(\mathbf{i}, j)$ as a linear combination of the images $v_{(s, \mathbf{k})}$. By Proposition~\ref{prop:dims}, the elements $F_\Delta(\mathbf{i}, j)$ form a basis for $\Delta^{(2,1^{N-1})}\Sym^d\E$, so we are done.
\end{proof}

This completes the proof that $\phi$ is surjective, which concludes the proof of Theorem~\ref{thm:main}.

\section{Final remarks}

In this section we first show that the $\SL_2(\F)$ 
isomorphism in Theorem~\ref{thm:main} becomes a $\GL_2(\F)$-isomorphism provided a suitable
power of the determinant is introduced. 
(This is typical of the general theory: see \cite[\S 3.3]{PagetWildonSL2}.)
We then obtain identity~\eqref{eq:qCharactersSame} by taking characters.
We finish with
a conjectured generalization of Theorem~\ref{thm:main}.

We denote the $1$-dimensional determinant representation of $\GL_2(\F)$ by~$\det$
and regard the domain and codomain of $\phi$ as representations of $\GL_2(\F)$ in the obvious way.

\begin{corollary}\label{cor:phiGL}
Let $N \in \N$ and let $d \in \N_0$.
The map $\phi$ defined in Definition~\ref{defn:phi} is an
isomorphism of $\GL_2(\F)$-representations
\[ \Sym^{N-1}\E \otimes \bigwedge^{N+1} \Sym^{d+1}\E \cong \det^N \otimes\, \Delta^{(2,1^{N-1})} \Sym^d\E.\]
\end{corollary}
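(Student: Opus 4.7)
The plan is to leverage Theorem~\ref{thm:main} and extend equivariance from $\SL_2(\F)$ to $\GL_2(\F)$ by a short weight computation. Since $\GL_2(\F)$ is generated by $\SL_2(\F)$ together with the diagonal matrices $g = \mathrm{diag}(\lambda,\mu)$ for $\lambda, \mu \in \F^\times$ (indeed any $h \in \GL_2(\F)$ can be written $h = s \cdot \mathrm{diag}(\det h, 1)$ with $s \in \SL_2(\F)$), it suffices to verify the identity $\phi(g \cdot x) = \det(g)^N \bigl(g \cdot \phi(x)\bigr)$ for all such $g$ and all $x$ in the canonical basis of the domain. Interpreting the right-hand side as the action of $g$ on $\phi(x)$ viewed as a vector in the twisted representation $\det^N \otimes \Delta^{(2,1^{N-1})}\Sym^d\E$, this identity is precisely $\GL_2(\F)$-equivariance for the codomain as stated in the corollary.

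For the computation I would observe that the monomial $X^{c-i}Y^i \in \Sym^c\E$ is a $g$-eigenvector of eigenvalue $\lambda^{c-i}\mu^i$, and that wedge products and tensor products of such eigenvectors are themselves eigenvectors with weights obtained by summing exponents. Thus the canonical basis element $X^{N-1-s}Y^s \otimes F^{(d+1)}_\wedge(\mbf{k})$ of the domain has $g$-eigenvalue $\lambda^{(N-1-s) + (d+1)(N+1) - |\mbf{k}|}\mu^{s + |\mbf{k}|}$, whereas each summand $F(\mbf{i}, s+|\mbf{k}|-N-|\mbf{i}|)$ in its image under $\phi$, as given by~\eqref{eq:phiAlt} and the expansion in Definition~\ref{defn:F}, has $g$-eigenvalue $\lambda^{d(N+1) - s - |\mbf{k}| + N}\mu^{s + |\mbf{k}| - N}$. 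Subtracting exponents, the $X$-degree and $Y$-degree each drop by exactly $N$, independently of $s$, $\mbf{k}$ and $\mbf{i}$.

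Consequently the $g$-eigenvalue of $\phi(x)$ is $(\lambda\mu)^{-N} = \det(g)^{-N}$ times that of $x$, which rearranges to $\phi(g \cdot x) = \det(g)^N \bigl(g \cdot \phi(x)\bigr)$ for every canonical basis element $x$ and hence by linearity for every $x$ in the domain. Combined with the $\SL_2(\F)$-equivariance already established in Theorem~\ref{thm:main} and the generation statement above, this upgrades $\phi$ to a $\GL_2(\F)$-isomorphism onto $\det^N \otimes \Delta^{(2,1^{N-1})}\Sym^d\E$. There is no real obstacle here; the only point needing any care is the orientation of the determinant twist, but the uniform bidegree shift by $(N,N)$ pins the exponent down unambiguously as $+N$.
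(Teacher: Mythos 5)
Your proof is correct, and it follows the same basic strategy as the paper (reduce to $\SL_2$-equivariance plus a weight computation on a complementary set of generators), but with one genuine difference in the reduction step. The paper first extends scalars to the algebraic closure $K$ of $\F$, because it wants to write $\GL_2(K) = \langle \SL_2(K), \alpha I \rangle$, and this generation by \emph{scalar} matrices requires every determinant to have a square root. You instead use the factorisation $h = s\cdot\mathrm{diag}(\det h,1)$ with $s \in \SL_2(\F)$, so that $\GL_2(\F)$ is generated by $\SL_2(\F)$ together with diagonal matrices over $\F$ itself, and no field extension is needed. Since every canonical basis monomial in the domain and codomain is an eigenvector for $\mathrm{diag}(\lambda,\mu)$, and your exponent bookkeeping (a uniform drop of $N$ in both the $X$- and $Y$-degrees, hence a factor $(\lambda\mu)^{N} = \det(g)^{N}$) is accurate, the twisted equivariance $\phi(g\cdot x) = \det(g)^N\,(g\cdot\phi(x))$ follows on generators and hence on all of $\GL_2(\F)$, the set of $g$ satisfying it being a subgroup because $g \mapsto \det(g)^N$ is a homomorphism. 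What your route buys is the avoidance of the base-change argument (and the accompanying check that $\widetilde{\phi}$ restricts to $\phi$); what the paper's route buys is a marginally shorter computation, since for $\alpha I$ one only needs the total degree of each basis element rather than the separate $X$- and $Y$-degrees. Both are complete proofs.
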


\begin{proof}
Let $K$ be the algebraic closure of  $\F$.
Let $\widetilde{E} =\E \otimes_\F K$. It is sufficient to prove that the map 
\[ \widetilde{\phi} : \Sym^{N-1}\!\widetilde{E}\otimes \Sym^{N+1} \Sym^{d+1}\!\widetilde{E} \rightarrow  
 \det^N \otimes \, \Delta^{(2,1^{N-1})}\Sym^{d}\!\widetilde{E}\]
is a $\GL_2(K)$-isomorphism, since $\phi$ is defined with coefficients in the prime subfield 
 of $\F$,
and so via the inclusion $E \mapsto E \otimes 1 \subseteq E \otimes_\F K$, the map $\widetilde{\phi}$ restricts to $\phi$.
By Theorem~\ref{thm:main} for the field $K$, the map $\widetilde{\phi}$ is an $\SL_2(K)$-homomorphism. 
Now, because $K$ is algebraically closed, and so every element of $K$ has a square root in $K$, we have
\[ \GL_2(K) = \left\langle \SL_2(K), \left(\begin{matrix} \alpha & 0\\ 0 &\alpha \end{matrix} \right) : \alpha \in K
\backslash \{0\} \right\rangle .\]
It therefore suffices to prove that $\widetilde{\phi}$ commutes with the action of the diagonal matrices
$\alpha I$ for $\alpha \in K$. Using the canonical bases of the domain and codomain of $\widetilde{\phi}$, 
one  sees that
on the domain $\alpha I$ acts as $\alpha^{N-1 + (N+1)(d+1)} = \alpha^{(N+1)d + 2N}$
and on the codomain $\alpha I$ acts as $\alpha^{(N+1)d} \det(\alpha I)^N \!=\! \alpha^{(N+1)d} \alpha^{2N}$.
Since the exponents agree, this completes the proof.
\end{proof}

We now prove  identity~\eqref{eq:qCharactersSame}. Recall
that $s_\lambda$ is the Schur function canonically labelled by the partition $\lambda$.
It is immediate from the combinatorial definition of Schur functions (see for instance
\cite[Definition 7.10.1]{StanleyII}) that
$s_\lambda(1,q,\ldots, q^d)$ is 
the generating function enumerating semistandard tableaux of shape $\lambda$
with entries from $\{0,1,\dots, d\}$ by their sum of entries.
This gives a combinatorial interpretation of the right-hand side in~\eqref{eq:qCharactersSame}
and in Corollary~\ref{cor:qChar} below.
One of the most natural interpretations of the $q$-binomial coefficient $\qbinom{a}{b}_q$ 
is that $q^{\frac{b(b-1)}{2}} \qbinom{a}{b}_q$ is the generating function enumerating
$b$-subsets of $\{0,\ldots, a-1\}$ by their sum of entries. Thus, by identifying semistandard tableaux
of shape $(1^{N+1})$ with the subset of their entries, we deduce that
\begin{equation}
\label{eq:qBinomial} 
q^{\frac{(N+1)N}{2}} \qbinom{d+2}{N+1}_q = s_{(1^{N+1})}(1,q,\ldots, q^{d+1}). \end{equation}
For further background on $q$-binomial coefficients,
including the theorem that $\qbinom{a}{b}_q$ is the generating function enumerating partitions in the
$b \times (a-b)$ box by their size,
we refer the reader to \cite[\S 1.7]{StanleyI}.

\begin{corollary}\label{cor:qChar}
For any $N \in \N$ and $d \in \N_0$ we have
\[ 
q^{\frac{N(N-1)}{2}} [N]_q \qbinom{d+2}{N+1}_q = s_{(2,1^{N-1})}(1,q,\ldots, q^d).
\]
\end{corollary}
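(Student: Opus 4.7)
The plan is to take $\GL_2(\F)$-characters of the isomorphism in Corollary~\ref{cor:phiGL}, evaluated at the diagonal matrix $\mathrm{diag}(1,q)$. Under this specialization the character of a representation becomes its $q$-graded dimension, with $q$ tracking total $Y$-degree; crucially, because Corollary~\ref{cor:phiGL} is a genuine $\GL_2(\F)$-isomorphism, the two characters must agree as polynomials in~$q$.

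On the left-hand side of Corollary~\ref{cor:phiGL}, the basis $\{X^{N-1-s}Y^s\}$ of $\Sym^{N-1}\E$ contributes $[N]_q$; the basis $\{F_\wedge^{(d+1)}(\mbf{k})\}$ of $\bigwedge^{N+1}\Sym^{d+1}\E$ indexed by strictly increasing $\mbf{k} \in \I{d+1}{N+1}$ contributes $\sum_{\mbf{k}} q^{|\mbf{k}|}$, which is $s_{(1^{N+1})}(1, q, \ldots, q^{d+1})$ and therefore equals $q^{N(N+1)/2}\qbinom{d+2}{N+1}_q$ by~\eqref{eq:qBinomial}. On the right-hand side, $\det^N$ contributes a factor of $q^N$. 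For $\Delta^{(2,1^{N-1})}\Sym^d\E$ I would use the $F_\Delta$ basis from Proposition~\ref{prop:dims}: since the neighbour map $\NB$ preserves content, both summands of $F_\Delta(\mbf{i},j)$ carry the same $Y$-degree $|\mbf{i}|+j$, so $F_\Delta(\mbf{i},j)$ is a weight vector of weight $q^{|\mbf{i}|+j}$. The semistandard pairs $(\mbf{i},j)$ correspond bijectively to SSYT of shape $(2,1^{N-1})$ with entries in $\{0,\ldots,d\}$ via $(\mbf{i},j) \mapsto t_{(\mbf{i},j)}$, and $|\mbf{i}|+j$ is precisely the sum of entries of that tableau, so the combinatorial definition of Schur functions identifies the character of $\Delta^{(2,1^{N-1})}\Sym^d\E$ with $s_{(2,1^{N-1})}(1, q, \ldots, q^d)$.

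Equating the two characters yields
\[ [N]_q \cdot q^{N(N+1)/2}\qbinom{d+2}{N+1}_q = q^N \cdot s_{(2,1^{N-1})}(1,q,\ldots,q^d), \]
and dividing by $q^N$ gives the claim, since $\tfrac{N(N+1)}{2} - N = \tfrac{N(N-1)}{2}$. There is no real obstacle beyond bookkeeping; the only point worth spelling out is that $F_\Delta(\mbf{i},j)$ is genuinely a weight vector, which, as noted, reduces to the observation after Definition~\ref{defn:neighbour} that $\NB$ preserves the content multiset.
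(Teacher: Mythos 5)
Your proposal is correct and follows essentially the same route as the paper: both take characters of the $\GL_2$-isomorphism in Corollary~\ref{cor:phiGL} at $\mathrm{diag}(1,q)$, use~\eqref{eq:qBinomial} for the exterior-power factor, and cancel the $q^N$ coming from $\det^N$. The only difference is that the paper cites \cite[(10)]{PagetWildonSL2} for the identification of the character of $\Delta^{(2,1^{N-1})}\Sym^d\E$ with $s_{(2,1^{N-1})}(1,q,\ldots,q^d)$, whereas you derive it self-containedly from the $F_\Delta$ basis of Proposition~\ref{prop:dims} and the fact that the neighbour map preserves content; both are fine.
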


\begin{proof}
It suffices to prove the identity when $q$ is a non-zero complex number.
It is clear from the canonical basis $X^{N-1}, X^{N-1}Y, \ldots, Y^{N-1}$ of $\Sym^{N-1}\E$ 
that $[N]_q = 1+ q + \cdots + q^{N-1}$ is the
character of $\Sym^{N-1}\E$ evaluated at the diagonal matrix $D$ in $\GL_2(\C)$ with
entries $1$ and~$q$.
By \cite[(10)]{PagetWildonSL2}, 
$s_{(1^{N+1})}(1,q,\ldots, q^{d+1})$ and  $s_{(2,1^{N-1})}(1,q,\ldots, q^d)$ 
are  the
characters of the $\GL_2(\C)$-representations $\bigwedge^{N+1}\Sym^{d+1}\E$
and  $\Delta^{(2,1^{N-1})}\Sym^d \E$,
also evaluated at $D$. Therefore, by~\eqref{eq:qBinomial},
the character of $\Sym^{N-1}\E \otimes \bigwedge^{N+1}\Sym^{d+1}\E$
evaluated at $D$ is 
\smash{$[N]_q\, q^{(N+1)N/2}  \qbinom{d+2}{N+1}_q$}. But in Corollary~\ref{cor:phiGL} 
we showed that this representation is isomorphic 
to $\det^N \otimes\, \Delta^{(2,1^{N-1})}E $. Hence,
equating the character values we obtain
\[ 
[N]_q\, q^{\frac{(N+1)N}{2}} \qbinom{d+2}{N+1}_q = q^N s_{(2,1^{N-1})}(1,q,\ldots, q^d) .
\]
The result follows by cancelling $q^N$ from each side.
\end{proof}

Our main result, Theorem~\ref{thm:main},
is the special case when $M=2$ of the following conjecture.

\begin{conjecture}\label{conj:phiGeneral}
Let $M$, $N \in \N$. There is an isomorphism of $\SL_2(\F)$-representations
\[ \bigwedge^{M-1} \Sym^{M+N-3}\E \,\otimes \!\bigwedge^{M+N-1}\Sym^{M+d-1}\E \cong \Delta^{(M,1^{N-1})}\Sym^d\E. \]
\end{conjecture}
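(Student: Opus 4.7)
The plan is to mirror the proof of Theorem~\ref{thm:main} for the general hook partition $(M, 1^{N-1})$. First I would set up the Schur functor as
\[
\Delta^{(M, 1^{N-1})} V = \ker\Bigl( \mu : \bigwedge^N V \otimes \Sym^{M-1} V \longrightarrow \bigwedge^{N+1} V \otimes \Sym^{M-2} V \Bigr),
\]
where $\mu(v_1 \wedge \cdots \wedge v_N \otimes w_1 \cdots w_{M-1}) = \sum_{j=1}^{M-1} (v_1 \wedge \cdots \wedge v_N \wedge w_j) \otimes (w_1 \cdots \widehat{w}_j \cdots w_{M-1})$; this recovers $\mu_N$ when $M=2$. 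I would then establish a basis theorem for $\Delta^{(M,1^{N-1})}\Sym^d\E$ indexed by semistandard Young tableaux of shape $(M,1^{N-1})$ with entries in $\{0,\ldots,d\}$, with explicit basis elements $F_\Delta(\mbf{i},\mbf{j})$ built as signed sums over an appropriate neighbour relation on hook tableaux and a dimension count via rank--nullity on $\mu$ together with the hook-content formula, generalizing Lemmas~\ref{lemma:DeltaLinearlyIndependent}--\ref{lemma:combinatorial} and Proposition~\ref{prop:dims}.

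The crux is producing an explicit $\F$-linear map
\[
\phi_M : \bigwedge^{M-1}\Sym^{M+N-3}\E \otimes \bigwedge^{M+N-1}\Sym^{M+d-1}\E \longrightarrow \bigwedge^N \Sym^d\E \otimes \Sym^{M-1}\Sym^d\E
\]
generalizing Definition~\ref{defn:phi}. A canonical domain basis element has the form $F^{(M+N-3)}_\wedge(\mbf{s}) \otimes F^{(M+d-1)}_\wedge(\mbf{k})$ with $\mbf{s}$ and $\mbf{k}$ strictly increasing; since $\mbf{k}$ now has $M+N-1$ entries that must produce $N$ wedge indices plus $M-1$ symmetric indices, I would seek a formula that sums over interleavings of $\{k_1,\ldots,k_{M+N-1}\}$ into a ``wedge block'' $\mbf{i}$ and a ``symmetric block'' $\mbf{j}$, with $\mbf{s}$ threading through as a signed shift so that the total $Y$-degree is preserved modulo~$N$. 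The formula must use only signed sums of canonical basis elements with integer coefficients, so that the McDowell--Wildon reduction recalled in the excerpt applies and the $\SL_2(\F)$-equivariance check reduces to commutation with $e, f \in \sl_2(\C)$.

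Once $\phi_M$ is defined, the remaining steps follow the excerpt's blueprint: show $\phi_M$ lands in $\ker \mu$ via an involution or cancellation argument on its summands, generalizing Lemma~\ref{lemma:imageInDelta}; verify commutation with $e$ via a technical identity in the style of Lemma~\ref{lemma:technical}, matching coefficients after applying the tensor-derivation rule~\eqref{eq:tensorAction}; verify commutation with $f$ via a duality argument using generalizations of the reversal maps $\tau$ and $\tau'$ from Lemma~\ref{lemma:intertwining}; and finally deduce bijectivity from equality of dimensions combined with a triangularity argument on $F_\Delta$-basis expansions, ordering semistandard hook tableaux by content as in Proposition~\ref{prop:triangular}.

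The principal obstacle is finding the correct formula for $\phi_M$. The elegance of the $M=2$ case relies on $\B(\mbf{k})$ being a single product of half-open intervals $[k_\alpha, k_{\alpha+1})$, with $\mbf{s}$ reduced to a scalar shifting just one symmetric coordinate. For general~$M$, the extra wedge factor $\bigwedge^{M-1}\Sym^{M+N-3}\E$ in the domain and the extra symmetric factor $\Sym^{M-1}\Sym^d\E$ in the codomain force the formula to interleave $M-1$ ``levels'' simultaneously, and the data in $\mbf{s}$ must be distributed across these levels with signs reflecting the antisymmetry of $\bigwedge^{M-1}$. I expect the correct formula to emerge by writing down the generic ansatz suggested by the $M=2$ case, imposing both the landing-in-kernel condition and the $e$-commutation identity as constraints, and solving for the coefficients; once the formula is pinned down, the remaining verifications should be laborious but largely routine generalizations of the arguments in Sections~2 and~3.
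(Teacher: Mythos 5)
The statement you are addressing is Conjecture~\ref{conj:phiGeneral}, which the paper leaves open: the authors prove only the case $M=2$ (Theorem~\ref{thm:main}) and observe that $M=1$ and $N=1$ reduce to known results. There is therefore no proof in the paper to compare against, and your proposal does not supply one either. What you have written is a research plan whose central ingredient --- the explicit map $\phi_M$ --- is not constructed. You say yourself that you would ``seek a formula'' summing over interleavings and would determine its coefficients by ``writing down the generic ansatz \ldots and solving for the coefficients.'' But the existence of an integral, uniformly defined formula with the required properties is precisely the content of the conjecture; the paper stresses (citing \cite[Theorem 1.6]{McDowellWildon}) that equality of $q$-characters is generically \emph{not} enough to give an isomorphism over arbitrary fields, so one cannot presume that a suitable $\phi_M$ exists and merely needs to be found. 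Until the map is exhibited and the kernel, $e$-commutation, duality, and triangularity steps are actually carried out for it, nothing has been proved.

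Two further cautions about the plan itself. First, your presentation $\Delta^{(M,1^{N-1})}V = \ker\bigl(\bigwedge^N V \otimes \Sym^{M-1}V \to \bigwedge^{N+1}V \otimes \Sym^{M-2}V\bigr)$ is a reasonable candidate (and agrees with the paper's definition at $M=2$), but over an arbitrary field one must verify that this kernel has the dimension $s_{(M,1^{N-1})}(1^{d+1})$ predicted by the semistandard count --- i.e., that the analogue of Proposition~\ref{prop:dims} holds, which for $M=2$ required the map $\mu_N$ to be surjective; this needs an argument for general $M$. Second, the ``routine generalization'' of the triangularity step (Proposition~\ref{prop:triangular}) is the part most tightly coupled to the exact shape of the formula for $\phi_M$: the paper's argument depends on $\B(\mbf{k})$ being a product of half-open intervals and on the precise interaction of the neighbour map with that box, so this step cannot be taken on faith before the formula is fixed. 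In short, the proposal is a sensible outline of how one might attack the conjecture, but it contains a genuine gap at its core and does not establish the statement.
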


If $M=1$, then the first factor is $\F$ and since $\Delta^{(1^N)}V = \bigwedge^N V$, 
both sides in the claimed isomorphism are $\bigwedge^N \Sym^d \E$.
If $N=1$ then since $\Sym^{M-2}\E$ is $(M-1)$-dimensional,
and so $\bigwedge^{M-1} \Sym^{M-2}\E$ is the determinant representation of $\SL_2(\F)$, which is trivial,
and $\Delta^{(M)}V = \Sym_M V$,
the claimed isomorphism is $\bigwedge^M \Sym^{M+d-1} \E \cong \Sym_M \Sym^d\E$. An explicit
isomorphism from the right-hand side to the left-hand side is given by Theorem 1.4 in \cite{McDowellWildon}.
More broadly, it would be interesting to have field-independent results on
the endomorphism rings of the two sides in Conjecture~\ref{conj:phiGeneral}.

\section*{Acknowledgements}
The first author would like to thank his advisor Mikhail Khovanov for valuable discussions.
The second author gratefully acknowledges financial support from the Heilbronn Institute for Mathematical
Research, Bristol, UK. The authors are very grateful to the anonymous referee 
for an exceptionally careful reading of the first version of this paper and many helpful comments and
corrections.
The authors also thank {\'A}lvaro Guti{\'e}rrez C{\'a}ceres and Micha\l\ Szwej for helpful comments and corrections.

\end{document}